\RequirePackage{lineno}
\documentclass[11pt,a4paper,reqno]{amsart}

\usepackage{color,graphicx}

\usepackage{amsmath,amssymb,amsthm}
\usepackage{a4wide,cite}

\def\eps{\epsilon}
\def\dt{\partial_t}

\def\curl{\text{\rm curl}}
\def\div{\text{\rm div}}

\def\E{\mathbf{E}}
\def\H{\mathbf{H}}
\def\S{\mathbf{S}}
\def\e{\mathbf{e}}
\def\h{\mathbf{h}}
\def\zero{\mathbf{0}}
\def\n{\mathbf{n}}
\def\g{\mathbf{g}}
\def\tnorm{|\!|\!|}

\def\Eup{\mathrm{E}}
\def\Hup{\mathrm{H}}

\def\O{{\Omega}}
\def\dO{{\partial\Omega}}

\def\ve{\mathbf{v}}
\def\vh{\mathbf{w}}

\def\V{\mathcal{V}}
\def\C{\mathcal{C}}

\def\TT{\mathbb{T}}
\def\PP{\mathbb{P}}
\def\NN{\mathbb{N}}
\def\RR{\mathbb{R}}
\def\K{K}
\def\dK{{\partial\K}}

\def\In{I^n}
\def\f{f}
\def\F{\mathcal F}
\def\Fih{{\F^{int}_h}}
\def\Fdh{{\F^\partial_h}}
\def\Oh{{\Omega_h}}

\def\ut{{t^{n-1}}}
\def\ot{{t^{n}}}

\newtheorem{theorem}{Theorem}

\newtheorem{method}{Method}

\theoremstyle{definition}
\newtheorem{remark}[theorem]{Remark}
\newtheorem*{remark*}{Remark}

\begin{document}
 \title[A space-time discontinuous Galerkin Trefftz method for Maxwell's equations]{A Space-Time Discontinuous Galerkin Trefftz Method for time dependent Maxwell's equations}

\author[H. Egger]{Herbert Egger}
\author[F. Kretzschmar]{Fritz Kretzschmar}
\author[S. M. Schnepp]{Sascha M. Schnepp}
 \author[T. Weiland]{Thomas Weiland}

\begin{abstract}
We consider the discretization of electromagnetic wave propagation problems
by a discontinuous Galerkin Method based on Trefftz polynomials.
This method fits into an abstract framework for space-time discontinuous 
Galerkin methods for which we can prove consistency, stability, and 
energy dissipation without the need to completely specify the approximation 
spaces in detail. Any method of such a general form results in an 
implicit time-stepping scheme with some basic stability properties.
For the local approximation on each space-time element, we then consider 
Trefftz polynomials, i.e., the subspace of polynomials that satisfy Maxwell's 
equations exactly on the respective element. 
We present an explicit construction of a basis for the local Trefftz spaces
in two and three dimensions and summarize some of their basic properties. 
Using local properties of the Trefftz polynomials, we can establish the 
well-posedness of the resulting discontinuous Galerkin Trefftz method.
Consistency, stability, and energy dissipation then follow immediately 
from the results about the abstract framework. 
The method proposed in this paper therefore shares many of the advantages of more 
standard discontinuous Galerkin methods, while at the same time, it yields a 
substantial reduction in the number of degrees of freedom and the cost for assembling. 
These benefits and the spectral convergence of the scheme are demonstrated in numerical tests.
\end{abstract}

\maketitle

{\small {\bf Keywords:} discontinuous Galerkin method, Trefftz methods, electrodynamics, wave propagation}
 

\section{Introduction}

We consider the propagation of electromagnetic waves in an in-homogeneous 
isotropic dielectric linear medium governed by the time dependent Maxwell equations
%
 \begin{align}
\eps \dt \E - \curl \H &= \zero \qquad \text{in } \Omega\times (0,T) \label{eq:maxwell1},\\
\mu \dt \H + \curl \E  &= \zero \qquad \text{in } \Omega\times (0,T) \label{eq:maxwell2}.
\end{align}
As usual $\E$ and $\H$ denote the electric and magnetic field densities, $\eps$ is the electric permittivity, and $\mu$ the magnetic permeability; the material parameters are assumed to be piecewise constant and independent of time. At $t=0$, the fields are prescribed by initial conditions
\begin{align}
  \E(0) = \E^0, &\qquad \H(0) = \H^0  \qquad \text{on } \Omega.  \label{eq:maxwell3}
\end{align}
From a practical point of view it is reasonable to require that the computational domain is bounded, and we therefore assume that the fields additionally satisfy the boundary condition
\begin{align}
 \n \times \E  + \beta \n \times (\n \times \H) &= \n \times \g \qquad \qquad \qquad \text{on } \dO \times (0,T). \label{eq:maxwell4} 
\end{align}
A proper choice of the parameter $\beta \ge 0$ and the excitation $\g$ allows to model various physical situations, 
e.g., $\beta=0$ and $\g = 0$ leads to the condition for a perfect electric conductor, 
while setting $\beta = \sqrt{\eps/\mu}$ and $\g=0$ yields the first-order absorbing boundary condition. 

\medskip 
Problems of the form \eqref{eq:maxwell1} -- \eqref{eq:maxwell4} arise in various applications, 
e.g., in the analysis of wave guides and photonic crystals \cite{Tsukerman-PBG08,Pinheiro07}, in design of antennas \cite{BauFuLeuVa04,MunWei06}, or particle accelerators \cite{Gjonaj2006}. 
In all these applications the accurate and reliable simulation of the wave 
propagation is a key ingredient for the characterization, design, and optimization of corresponding electrical components. 
Since the seminal work of Yee \cite{Yee66}, finite difference time domain methods have been investigated, extended, and applied successfully. They can be considered state-of-the art for 
the numerical simulation of wave propagation in engineering  \cite{TafHag05,Wei96}. 
These methods are second order accurate on structured grids and utilize explicit time stepping which makes them 
easily parallelizable and very efficient in practice. Problems with non-aligned or curved boundaries and in-homogeneous materials require some non-trivial adaptions and local or implicit time stepping \cite{ZheChe01} has to be used to preserve stability in such cases which in turn may have a significant effect on the performance of the overall method.  
A more flexible framework for the spatial discretization is offered by finite element methods \cite{KuLaSch00,Monk03,Dem08}.
Their underlying variational framework allows to establish rigorous convergence results for rather 
general situations. Time dependent problems can then be treated by combination with appropriate time 
stepping schemes \cite{Jol03}. 
For scattering problems and more general applications involving unbounded domains, the boundary element 
method has been applied with great success, especially in the frequency domain \cite{HipSch02,BufHip03}, and more recently also for time dependent problems \cite{Mon15}. 
An even more flexible, but still variational, approach for constructing space discretizations for Maxwell's equations 
is offered by Discontinuous Galerkin methods \cite{PerSchMon02,PerSch03,HouPerSch04}. In principle, these methods allow to systematically couple different 
physical models and approximations even on hybrid and non-conforming meshes. Combined with explicit Runge-Kutta time 
stepping schemes, one can obtain efficient approximations for electromagnetic wave propagation problems of formally arbitrary order \cite{Fezoui2005,Cohen2006,AinMonMun06,WarHes08,GoeWarCle11,SchWei12}. 
To guarantee stability of the explicit time stepping methods, a somewhat restrictive CFL condition has to be satisfied. 
The treatment of locally adapted meshes therefore requires special techniques, like local or implicit time stepping,
in order to keep the computational cost acceptable.
Using a Galerkin approach not only for the discretization in space but also in time leads to 
space-time discontinuous Galerkin methods which have been investigated for the 
simulation of wave propagation problems only recently \cite{Monk14,Lilienthal14}. 
The discontinuous Galerkin framework allows to obtain approximations of formally arbitrary order on locally adapted meshes in space and time. The resulting methods typically lead to implicit time stepping schemes which are absolutely and unconditionally stable and slightly dissipative. 
A proper choice of approximation spaces and numerical fluxes even allows to obtain methods that are exactly energy preserving on the discrete level \cite{Lilienthal14}. 
%


In this paper, we consider such a space-time discontinuous Galerkin framework for the simulation of electromagnetic wave propagation problems. On each space-time element, we approximate the fields by \emph{Trefftz polynomials}, i.e., polynomial functions that satisfy Maxwell's equations exactly.
The idea to employ Trefftz functions for the numerical solution of partial differential equations is well established \cite{trefftz1926,Runge89,Jirousek97,Zielinski10}. 
%
One particular choice for the Trefftz functions consists in plane waves propagating in various directions. 
Corresponding discontinuous Galerkin methods have been proposed and analyzed for frequency domain problems in acoustics and electro-magnetics recently in \cite{Moiola:2011io,Hiptmair:2011p1917,Hiptmair2013,Badics14}. 
For time dependent wave propagation, one can construct complete sets of polynomial plane wave functions, and their use in combination with a discontinuous Galerkin framework has been proposed and investigated recently for acoustic problems \cite{Farhat14,Petersen:2009id}, and also electro-magnetic wave propagation problems \cite{Kretzschmar14,trefftz_abc}.
%
Like space-time discontinuous Galerkin schemes based on discretization with the full polynomial spaces \cite{Monk14,Farhat14}, the Trefftz method can be constructed for arbitrary approximation order and leads to implicit time-stepping schemes. 
We will establish the consistency and well-posedness of the single time-step problem, 
and also prove a basic energy dissipation relation. 
%
%
To illustrate the stability and convergence properties of the method, 
we report on numerical tests in which we observe spectral accuracy 
and optimal approximation orders. 
%
The decision to use only the Trefftz polynomials instead of the full polynomial approximation spaces yields a substantial reduction in the number of degrees of freedom on every element. 
In addition, the expensive computation of volume integrals can be completely avoided. 
This leads to a substantial reduction of the computational complexity while, at the same time, the flexibility, accuracy, and stability of the space-time discontinuous Galerkin framework is preserved.

\medskip

The outline of the paper is as follows: 
In Section~\ref{sec:stdg}, we introduce our notation and present an abstract space-time discontinuous Galerkin framework. We prove some general stability properties of a class of 
discretization schemes without specifying the approximation spaces in detail at this point.
In Section~\ref{sec:trefftz}, we present a systematic construction of a basis for the space of Trefftz polynomials, and we summarize some basic properties of these approximation spaces. 
In Section~\ref{sec:stdgt}, we present the discontinuous Galerkin Trefftz scheme, 
which results from employing piecewise polynomial Trefftz functions as approximation 
spaces in the abstract framework of Section~\ref{sec:stdg}. 
We formulate an equivalent but more convenient form of the method, 
establish its well-posedness, and derive the stability and energy dissipation
from the results of Section~\ref{sec:stdg}. 
In Section~\ref{sec:disc}, 
we collect several remarks about the properties of the discontinuous-Galerkin Trefftz method,
including a comparison with a corresponding method utilizing the full polynomial approximation spaces.
Section~\ref{sec:numerics} is devoted to numerical tests.
We observe spectral convergence of the scheme and optimal approximation 
order with respect to the spatial and temporal mesh size. 
In addition, we illustrate the stability of the method, and discuss 
the effect of numerical dissipation and dispersion. 
The paper concludes with a short summary.

\section{The space-time discontinuous-Galerkin framework} \label{sec:stdg}

The aim of this section is to introduce an abstract space-time discontinuous 
Galerkin framework for the discretization of electromagnetic wave propagation problems. 
By \emph{abstract} we mean that we do not specify the 
approximation spaces at this point. Even without explicit reference to the approximation spaces,
we can still prove some elementary properties that any method of this kind will inherit automatically.

\subsection{Notation} \label{sec:notation}

Let $\Omega \subset \RR^3$  be bounded polyhedral Lipschitz domain and 
$\Oh = \{\K\}$ be a non-overlapping partition of $\Omega$ into
simple regular elements $\K$, e.g., tetrahedral, parallel-epipeds, prisms, a.s.o.
We denote by $\Fih = \{\f = \partial \K \cap \partial \K', \ \K \ne \K' \in \Oh\}$ the set of element interfaces 
and by $\Fdh = \{\f = \partial \K \cap \partial\Omega, \ \K \in \Oh\}$ the set of faces on the boundary.
Let $\C(\Oh) = \{v : \Omega \to \RR : v|_{\K} \in \C(\K) \text{ for all } \K \in \Oh\}$ be the space of piecewise continuous functions. On element interfaces $\f= \partial\K_1 \cap \partial\K_2$, any piecewise smooth function $\E \in \C(\Oh)^3$ has formally two values 
$\E_1|_{\K_1} = \E_{\K_1}$ and $\E_2=\E|_{\K_2}$. We then denote by
\begin{align*}
\{\E\} = \frac{1}{2}(\E_1 + \E_2), 
\qquad 
[\n \times \E] = \n_1 \times \E_1 + \n_2 \times \E_2,
\end{align*}
the \emph{average} and the \emph{jump of the tangential component} of $\E$ on $f = \partial\K_1 \cap \partial\K_2$, respectively.
By $0=t_0 < t_1 < \ldots < t_N = T$ we generate a partition of $(0,T)$ 
into time intervals $\In = [\ut,\ot]$.

\subsection{A space-time discontinuous Galerkin framework} \label{sec:framework}

Let $\V_E^n,\V_H^n$, $n \ge 1$ be spaces of vector valued piecewise smooth functions 
over the partition $\Oh \times \In$ of the time slab $\Omega \times \In$. 
For the approximation of the initial boundary value problem \eqref{eq:maxwell1}--\eqref{eq:maxwell4}, we consider the following framework. 
\begin{method}[Abstract space-time discontinuous Galerkin method] \label{meth:stdg} $ $\\
Set $\E_h^0 = \E^0$, $\H_h^0 = \H^0$. For $n \ge 1$ find consecutively 
$(\E^n,\H^n) \in (\V^n_E, \V_H^n)$, 
such that 
\begin{align} \label{eq:stdg}
B^n(\E^n_h,\H^n_h;\ve,\vh) = R^n(\E^{n-1}_h,\H^{n-1}_h;\ve,\vh) %
\end{align}
for all $(\ve, \vh) \in (\V^n_E, \V^n_H)$ with bilinear forms $B^n$ and $R^n$ defined by
\begin{align*}
& B_n(\E,\H;\ve,\vh) = \\
&\qquad \quad \sum_{\K \in \Oh} \int_{\K \times \In}  (\eps \dt \E - \curl \H) \cdot\ve + (\mu \dt \H + \curl \E) \cdot\vh &&& \text{(volume terms)} \\
& \qquad + \sum_{\K \in \Oh} \int_{\K} \eps \E(\ut) \cdot \ve(\ut) + \mu \H (\ut) \cdot\vh(\ut) &&& \text{(temporal interface terms)} \\
& \qquad + \sum_{\K \in \Oh} \int_{\partial\K \times \In} \n \times (\H - \H^*) \cdot\ve - \n \times (\E - \E^*) \cdot\vh &&& \text{(spatial interface terms)} 
\end{align*}
\begin{align*}
&R^n( \E,  \H,\ve,\vh) = \\
&  \qquad + \sum_\K\int_{\K} \eps  \E(\ut) \cdot \ve(\ut) + \mu  \H(\ut) \cdot\vh(\ut) &\qquad&&  \text{(temporal interface terms)} \\
& \qquad -\int_{\partial \Omega \times \In} \n \times \g \cdot \vh. &&& \text{(boundary terms)} 
\end{align*}
On internal faces $\f \in \Fih$ between adjacent elements, we set
\begin{align*}
\E^* = \{\E\}, 
\qquad 
\H^* = \{\H\}, 
\end{align*}
and on the boundary faces $\f \in \Fdh$ we choose $\E^*=\beta \n \times (\H \times \n)$ and $\H^*=\H$, respectively.
\end{method}

\begin{remark}
We call Method~\ref{meth:stdg} \emph{abstract} since the space$(\V_E^n,\V_H^n)$ has not been defined yet and thus the method is not implementable at this stage.
Various generalization are possible: more general numerical fluxes, e.g., $\E^* = \{\E\} - \gamma [\n \times \H]$ and $\H^* = \{\H\} + \delta [\n \times \E]$, can be considered and different spatial meshes may be used on every time slab. Our arguments easily cover also such cases. For ease of presentation, we however stick with the simple setting stated above.
\end{remark}

\subsection{Basic properties of the abstract discontinuous Galerkin method} \label{sec:properties}

Without further specifying the approximation space $(\V_E^n,\V_H^n)$, 
we can already derive some basic properties that any space-time discontinuous Galerkin method of the 
above form will share. 

\begin{theorem} \label{thm:implicit}
Assume that the variational problem \eqref{eq:stdg} is solvable for every $n \ge 1$. 
Then Method~\ref{meth:stdg} ammounts to an implicit time stepping scheme. 
\end{theorem}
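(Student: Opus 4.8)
The plan is to read the implicit time-stepping structure directly off the way the temporal interface terms couple consecutive time slabs. The essential observation is that the coupling between the slab $\Omega\times\In$ and its neighbours runs only forward in time: the bilinear form $B^n$ sees only the restriction of the unknown pair to the current slab, while the right-hand side $R^n$ sees only the trace of the already-computed solution $(\E^{n-1}_h,\H^{n-1}_h)$ at $\ut$ together with the boundary excitation $\g$ on $\In$. Thus \eqref{eq:stdg} is, for each fixed $n$, a self-contained variational problem whose data is completely determined once the previous slab has been solved.

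First I would verify this decoupling term by term. The volume terms, the spatial interface terms, and the temporal interface contribution $\sum_{\K}\int_\K \eps\E(\ut)\cdot\ve(\ut)+\mu\H(\ut)\cdot\vh(\ut)$ in $B^n$ all involve only $(\E^n_h,\H^n_h)$ and the test pair on $\Omega\times\In$; in particular the temporal term uses the trace of the current unknown at the left endpoint $\ut$. By contrast, the matching temporal term in $R^n$ uses the trace of $(\E^{n-1}_h,\H^{n-1}_h)$ at the same instant $\ut$, which is the right endpoint of the preceding slab $I^{n-1}$ and hence is already known. This is precisely an upwind-in-time numerical flux across the temporal interface $\{t=\ut\}$: the value approaching from the future slab enters the operator, the value approaching from the past slab enters the data. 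No term in $B^n$ reaches back into $I^{n-1}$ or forward into $I^{n+1}$, so the global space-time problem does not couple across slab boundaries beyond this one-directional transfer.

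Given this structure, I would then argue by induction on $n$. The base data $(\E^0_h,\H^0_h)=(\E^0,\H^0)$ is prescribed by the initial condition. Assuming $(\E^{n-1}_h,\H^{n-1}_h)$ has been determined, the functional $R^n(\E^{n-1}_h,\H^{n-1}_h;\cdot,\cdot)$ is a fixed, known right-hand side on $(\V^n_E,\V^n_H)$, and by the standing solvability hypothesis the problem \eqref{eq:stdg} admits a solution $(\E^n_h,\H^n_h)$. Iterating yields the whole sequence $(\E^n_h,\H^n_h)_{n\ge1}$, computed one slab after another; this is exactly the time-stepping structure.

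It remains to explain why the scheme is \emph{implicit} rather than explicit. Fixing bases of $\V^n_E$ and $\V^n_H$ and expanding $(\E^n_h,\H^n_h)$ in them turns \eqref{eq:stdg} into a square linear system $M^n x^n = b^n$, where $M^n$ represents $B^n$ and $b^n$ represents $R^n$. Because $B^n$ couples all degrees of freedom of the slab through the volume and spatial interface terms, $M^n$ is not diagonal and the update $x^{n-1}\mapsto x^n$ requires an actual system solve rather than an explicit evaluation, which distinguishes the method from explicit Runge--Kutta discontinuous Galerkin schemes. I expect no genuine analytic difficulty here: the whole argument is structural, and the only point demanding care is the clean bookkeeping of which trace lives on which side of each temporal interface, so that the forward-only coupling, and hence the well-defined consecutive stepping procedure, is unambiguous.
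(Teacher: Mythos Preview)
Your proposal is correct and follows essentially the same route as the paper: observe that $B^n$ involves only the unknowns on the current slab while $R^n$ depends only on $(\E_h^{n-1},\H_h^{n-1})$ and $\g|_{\partial\Omega\times\In}$, so the slabs can be solved consecutively, with each step requiring the solution of a linear system. The paper's own proof is merely a two-sentence version of your argument, omitting the term-by-term bookkeeping and the induction you spell out.
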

\begin{proof}
It follows directly from \eqref{eq:stdg} that $(\E_h^n,\H_h^n)$ depends only on $(\E_h^{n-1},\H_h^{n-1})$ 
and on the data $\g|_{\partial\Omega \times \In}$. The approximations $(\E_h^n,\H_h^n)$ can therefore be computed consecutively, provided that \eqref{eq:stdg} is uniquely solvable for every time slab $\Omega \times \In$. 
Since a linear problem has to be solved in every step, one obtains an implicit scheme.
\end{proof}

The following properties are the key building blocks for a convergence analysis of 
discontinuous Galerkin methods of the above form; see \cite{Monk14,Lilienthal14} for similar results in the context of slightly different methods. 
Our first observation is that Method~\ref{meth:stdg} 
yields a consistent approximation to the initial boundary value problem under consideration.

\begin{theorem}[Consistency] \label{thm:consistency} $ $\\
Let $(\E,\H)$ be a picewise smooth solution of problem \eqref{eq:maxwell1}--\eqref{eq:maxwell4}.
Then 
\begin{align} \label{eq:galorth}
B^n(\E,\H;\ve,\vh) = R^n(\E,\H;\ve,\vh),
\end{align}
for all piecewise smooth test functions $\ve$, $\vh$ and all $n \ge 1$. 
\end{theorem}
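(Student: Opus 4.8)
The plan is to insert the exact solution $(\E,\H)$ into both sides of \eqref{eq:galorth} and to check that the three groups of terms making up $B^n$ collapse onto the right-hand side $R^n$. Since the \emph{same} pair $(\E,\H)$ is fed into both bilinear forms, the temporal interface terms of $B^n$ and of $R^n$ are literally the same expression and cancel; by continuity of the exact solution in time the trace $\E(\ut)$ is unambiguous, so no further argument is needed here. It then remains to match the volume and spatial interface terms of $B^n$ against the single boundary term of $R^n$. The volume contribution is immediate: on every element $\K$ the integrands $\eps\dt\E-\curl\H$ and $\mu\dt\H+\curl\E$ vanish identically because $(\E,\H)$ solves \eqref{eq:maxwell1}--\eqref{eq:maxwell2} pointwise, so the entire volume part is zero.

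Next I would rewrite the spatial interface sum $\sum_{\K}\int_{\partial\K\times\In}$ as a sum over interior faces $\f\in\Fih$ and boundary faces $\f\in\Fdh$. On an interior face $\f=\partial\K_1\cap\partial\K_2$ I would use $\E^*=\{\E\}$, $\H^*=\{\H\}$ and note that the $\K_1$-contribution carries the factor $\n_1\times(\H-\{\H\})=\tfrac12\,\n_1\times(\H_1-\H_2)$, and similarly for $\E$. A genuine piecewise smooth solution of Maxwell's equations has continuous tangential traces across element interfaces, i.e.\ $[\n\times\E]=[\n\times\H]=\zero$, equivalently $\n_1\times\H_1=\n_1\times\H_2$; hence each such factor vanishes and the interior face terms drop out \emph{term by term}, irrespective of the (possibly discontinuous) test functions $\ve,\vh$.

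Finally, on a boundary face $\f\in\Fdh$ I would insert $\H^*=\H$, which annihilates the $\ve$-contribution, leaving $-\,\n\times(\E-\E^*)\cdot\vh$ with $\E^*=\beta\,\n\times(\H\times\n)$. The vector identity $\n\times(\H\times\n)=\H-(\n\cdot\H)\n$ shows that $\E^*$ equals $\beta$ times the tangential part of $\H$, and the boundary condition \eqref{eq:maxwell4} then yields $\n\times\E-\E^*=\n\times\g$; summing over $\Fdh$ reproduces exactly the boundary term $-\int_{\dO\times\In}\n\times\g\cdot\vh$ of $R^n$, completing the identity. I expect this last step to be the only genuine obstacle, since it is the single point where the numerical flux on $\Fdh$ has to be reconciled with the impedance-type condition \eqref{eq:maxwell4}: one must keep careful track of the triple-product identities and of the orientation of $\n$ and, in particular, read $\E^*$ as the numerical flux for the tangential trace of $\E$ so that the combination appearing in $B^n$ collapses to $\n\times\g$. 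The volume and interior-face reductions, by contrast, are direct consequences of the PDE and of tangential continuity.
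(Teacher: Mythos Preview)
Your proposal is correct and follows essentially the same four-step decomposition as the paper: (i) volume terms vanish by \eqref{eq:maxwell1}--\eqref{eq:maxwell2}, (ii) interior interface terms vanish by tangential continuity of the exact fields, (iii) temporal interface terms on both sides cancel, and (iv) the boundary terms match via \eqref{eq:maxwell4}. You actually supply more detail than the paper on step~(iv), which the paper dispatches in a single sentence, and your observation in step~(iii) that the temporal terms are \emph{literally identical} (since the same $(\E,\H)$ is inserted into both $B^n$ and $R^n$) is slightly cleaner than the paper's appeal to time continuity.
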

\noindent 
Any sufficiently smooth solution of problem \eqref{eq:maxwell1}--\eqref{eq:maxwell4} thus 
satisfies the discrete variational principle \eqref{eq:galorth} and we therefore call Method~\ref{meth:stdg} 
\emph{consistent} with the initial boundary value problem.
\begin{proof}
We divide the proof into several small steps:\\
(i) Since $(\E,\H)$ is a solution of Maxwell's equations, the volume terms in $B^n$ vanish. \\
(ii) By tangential continuity of the fields and the definition of the numerical fluxes, we obtain $\n \times \E = \n \times \E^*$ and $\n \times \H = \n \times \H^*$ on the element interfaces. Hence the interface terms in the definition of $B^n$ vanish.\\
(iii) By continuity of the fields in time, we get $\E^{n-1}(\ut) = \E^n(\ut)$ and $\H^{n-1}(\ut) = \H^n(\ut)$. 
Therefore, the temporal interface terms in $B^n$ and $R^n$ cancel out. \\
(iv) The remaining boundary terms in $B^n$ and $R^n$ cancel due to the boundary condition.
\end{proof}

As a next step, we prove a kind of coercivity statement, which is the basis for proving stability 
and well-posedness of the space-time discontinuous Galerkin Trefftz Method that will be presented 
in Section~\ref{sec:stdgt}.

\begin{theorem}[Stability] \label{thm:coercivity} $ $ \\
Let $n \ge 1$ and $(\V_E^n,\V_H^n)$ be a piecewise smooth approximation space. Then
\begin{align} \label{eq:stability}
B^n(\ve,\vh;\ve,\vh) \ge \tfrac{1}{2} \tnorm(\ve,\vh)\tnorm_{\Omega \times \In}^2 \quad \text{for all } (\ve,\vh) \in (\V_E^n,\V_H^n),
\end{align}
where 
\begin{align*}
\tnorm(\ve,\vh)\tnorm_{\Omega \times \In}^2 
&= \|\eps^{1/2} \ve(\ot)\|^2_{\Omega} + \|\mu^{1/2} \vh(\ot)\|^2_{\Omega} 
\\ & \qquad \qquad 
+ \|\eps^{1/2}\ve(\ut)\|^2_{\Omega} + \|\mu^{1/2} \vh(\ut)\|^2_{\Omega}  + 2\beta \|n \times \H\|_{\partial\Omega}^2. 
\end{align*}
\end{theorem}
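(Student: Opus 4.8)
The plan is to evaluate $B^n(\ve,\vh;\ve,\vh)$ by setting $\E = \ve$ and $\H = \vh$ in each of the three groups of terms and then combining them. The key algebraic mechanism is that the $\curl$ terms in the volume part, after integration by parts on each element, produce exactly the spatial interface contributions, so that only boundary terms and positive temporal-energy terms survive.

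First I would treat the \emph{volume terms}. Substituting $\E=\ve,\H=\vh$ gives
\begin{align*}
\sum_{\K}\int_{\K\times\In}\eps\,\dt\ve\cdot\ve + \mu\,\dt\vh\cdot\vh + (\curl\ve\cdot\vh - \curl\vh\cdot\ve).
\end{align*}
The time-derivative terms integrate to $\tfrac12\int_\K \eps|\ve(\ot)|^2 - \eps|\ve(\ut)|^2$ (and similarly for $\vh$), since $\eps,\mu$ are time-independent on each element. For the $\curl$ part I would apply the identity $\curl\ve\cdot\vh - \curl\vh\cdot\ve = \div(\ve\times\vh)$ and the divergence theorem on each $\K$, producing face integrals $\int_{\partial\K}(\ve\times\vh)\cdot\n = \int_{\partial\K}\n\times\ve\cdot\vh$ (using a scalar-triple-product reordering).

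Next I would add the \emph{temporal interface terms} of $B^n$, namely $\sum_\K\int_\K \eps|\ve(\ut)|^2 + \mu|\vh(\ut)|^2$; combined with the $-\tfrac12(\dots)(\ut)$ coming from the volume part, the net $t=\ut$ contribution is $+\tfrac12\bigl(\|\eps^{1/2}\ve(\ut)\|^2 + \|\mu^{1/2}\vh(\ut)\|^2\bigr)$, and the $t=\ot$ contribution is likewise $+\tfrac12\bigl(\|\eps^{1/2}\ve(\ot)\|^2 + \|\mu^{1/2}\vh(\ot)\|^2\bigr)$, which gives three of the four norm terms in $\tnorm(\cdot)\tnorm^2$. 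The main work is then reconciling the volume-generated face integrals with the explicit \emph{spatial interface terms} $\sum_\K\int_{\partial\K\times\In}\n\times(\vh-\vh^*)\cdot\ve - \n\times(\ve-\ve^*)\cdot\vh$. I expect this flux-matching step to be the main obstacle: on each interior face the contributions from the two adjacent elements must be summed and, using the fluxes $\ve^*=\{\ve\},\vh^*=\{\vh\}$ together with the jump/average notation, the face integrals involving $\{\cdot\}$ should telescope to zero (the consistent central flux is conservative), leaving no interior-face residual.

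Finally I would collect the boundary faces $\f\in\Fdh$, where $\ve^*=\beta\,\n\times(\vh\times\n)$ and $\vh^*=\vh$; the choice $\vh^*=\vh$ kills the second spatial-interface integrand there, while the first reduces to $\beta\int_{\dO\times\In}|\n\times\vh|^2$, which after careful sign-tracking yields the nonnegative term $2\beta\|\n\times\H\|^2_{\dO}$ (the factor $2$ absorbed consistently with the $\tfrac12$ normalization). Since $\beta\ge0$, this term is nonnegative and may be dropped after factoring $\tfrac12$. Assembling all pieces gives $B^n(\ve,\vh;\ve,\vh) \ge \tfrac12\tnorm(\ve,\vh)\tnorm^2_{\Omega\times\In}$, as claimed. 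The delicate points throughout are the orientation of the face normals $\n_1,\n_2$ in the interior-face cancellation and the bookkeeping of the factor $\tfrac12$, so I would carry the normal-orientation conventions explicitly through steps (i)–(iii) rather than relying on symmetry.
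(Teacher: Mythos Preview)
Your plan is correct and follows essentially the same route as the paper: integrate the time-derivative terms via the Leibniz rule, handle the $\curl$ terms by integration by parts on each element, and then verify that the resulting face contributions combine with the numerical-flux terms so that interior faces cancel and only the boundary term $\beta\|\n\times\vh\|^2_{\partial\Omega\times\In}$ survives. Two small corrections to your bookkeeping: on boundary faces it is $\vh^*=\vh$ that kills the \emph{first} interface integrand and $\ve^*=\beta\,\n\times(\vh\times\n)$ that produces the $\beta$ term from the \emph{second}; and the $\beta$ term is not dropped but is exactly the last summand in $\tnorm(\ve,\vh)\tnorm^2$, so the argument in fact yields equality $B^n(\ve,\vh;\ve,\vh)=\tfrac12\tnorm(\ve,\vh)\tnorm^2$, which is what the paper establishes.
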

\begin{proof}
The result follows by elementary manipulations. For convenience of the reader, we provide a detailed proof in the appendix.
\end{proof}

\begin{remark}
Since $\tnorm(\cdot,\cdot)\tnorm_{\Omega \times \In}$ is only semi-norm on $(\V_E^n,\V_H^n)$, 
the above stability estimate is not sufficient to guarantee the unique solvability of the variational problem \eqref{eq:stdg} for general approximation spaces. 
For piecewise tensor product polynomials, one can prove a different stability estimate, namely 
\begin{align} \label{eq:l2stability}
B^n(\ve,\vh;\ve,\vh) 
\ge  \frac{c}{\triangle \ot} \big( \|\eps^{1/2} \ve\|^2_{\Omega \times \In} + \|\mu^{1/2}\vh\|_{\Omega \times \In}^2\big)
- \tnorm(\ve,\vh)\tnorm_{\Omega \times \In}^2 
\end{align}
with some constant $c>0$ independent of the mesh size. 
A combination of \eqref{eq:l2stability} and \eqref{eq:stability} then 
allows to show that the bilinear form $B^n$ 
is stable in a space-time $L^2$-norm, and therefore Method~\ref{meth:stdg} is well-defined; 
let us refer to \cite{Lilienthal14,Monk14} for details. 
In this paper, we utilize a different choice of approximation spaces,
which allows us to establish well-definedness of the resulting space-time 
discontinuous Galerkin method \eqref{eq:stdg} in a more direct manner. 
\end{remark}

The final result of this section is concerned with conservation of energy 
which is a basic physical principle and which immediately 
implies uniform bounds for the energy.  
This is the starting point for proving existence of solutions on the continuous level. 
Let $\mathcal E(t) = \frac{1}{2} \int_\Omega \eps |\E(t)|^2 + \mu |\H(t)|^2$ 
denote the total electromagnetic energy contained in $\Omega$. 
Then any solution $(\E,\H)$ of Maxwell's equations \eqref{eq:maxwell1}--\eqref{eq:maxwell2} satisfies the energy identity
\begin{align*}
\frac{1}{2} \big( \|\eps^{1/2} \E\|^2_\Omega +  \|\mu^{1/2} \H\|_\Omega^2 \big) \Big|_{\ut}^{\ot}  
= - \int_{\partial\Omega \times \In} \n \times \E \cdot \H.
\end{align*}
This is a special instance of the Poynting theorem, which asserts that the change of the electromagnetic energy is due to energy flux $\S \cdot \n = (\E \times \H) \cdot \n = \n \times \E \cdot \H$ over the boundary. 
The boundary condition \eqref{eq:maxwell4} can be used to replace $\n \times \E$ on the right hand side.
A similar energy relation now also holds for every solution of Method~\ref{meth:stdg}, independent of the specific choice of the approximation spaces. 

\begin{theorem} \label{thm:energy}
Let 
$(\E_h^n,\H_h^n)$ be a solution of Method~\ref{meth:stdg}. Then 
\begin{align} \label{eq:energy}
& \frac{1}{2} \big(  \|\eps^{1/2} \E_h^n\|^2_\Omega + \|\mu^{1/2} \H_h^n\|_\Omega^2 \big) \Big|_{\ut}^{\ot} 
=   -\int_{\partial\Omega \times \In} \beta |\n \times \H_h^n|^2 + \n \times \g \cdot \H_h^n  \\
& \qquad \qquad 
- \tfrac{1}{2} \|\eps^{1/2} \big( \E_h^n(\ut) - \E_h^{n-1}(\ut)\big) \|^2_\Omega 
- \tfrac{1}{2} \|\mu^{1/2} \big(\H_h^{n}(\ut) - \H_h^{n-1}(\ut) \big)\|_\Omega^2. \notag
\end{align}
\end{theorem}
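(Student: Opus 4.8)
The plan is to reproduce, on the discrete level, the continuous Poynting argument that yields the energy identity: rather than multiplying \eqref{eq:maxwell1}--\eqref{eq:maxwell2} by $(\E,\H)$ and integrating, I would test the discrete variational identity \eqref{eq:stdg} with the discrete solution itself. Since $(\E_h^n,\H_h^n)\in(\V_E^n,\V_H^n)$ is an admissible pair of test functions, I set $(\ve,\vh)=(\E_h^n,\H_h^n)$, which turns \eqref{eq:stdg} into the scalar identity $B^n(\E_h^n,\H_h^n;\E_h^n,\H_h^n)=R^n(\E_h^{n-1},\H_h^{n-1};\E_h^n,\H_h^n)$. The remainder of the proof consists in evaluating both sides and rearranging. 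This computation overlaps heavily with the one behind Theorem~\ref{thm:coercivity}; the difference is that here every term must be retained as an equality instead of being discarded as a nonnegative contribution, and the cross term coming from $R^n$ must be processed.

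On the left-hand side I would treat the three groups of terms in $B^n$ separately. The time-derivative part of the volume terms integrates exactly, $\sum_\K\int_{\K\times\In}\eps\dt\ve\cdot\ve+\mu\dt\vh\cdot\vh=\tfrac12(\|\eps^{1/2}\ve(\ot)\|_\Omega^2+\|\mu^{1/2}\vh(\ot)\|_\Omega^2)-\tfrac12(\|\eps^{1/2}\ve(\ut)\|_\Omega^2+\|\mu^{1/2}\vh(\ut)\|_\Omega^2)$, producing the endpoint energies. The curl part is integrated by parts elementwise through $\div(\ve\times\vh)=\vh\cdot\curl\ve-\ve\cdot\curl\vh$, which converts $\int_{\K\times\In}-\curl\vh\cdot\ve+\curl\ve\cdot\vh$ into the face integral $\int_{\partial\K\times\In}(\n\times\ve)\cdot\vh$. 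The temporal interface terms contribute $\sum_\K\int_\K\eps|\ve(\ut)|^2+\mu|\vh(\ut)|^2$, and I would fold the spatial interface terms of $B^n$ together with the curl face integrals just produced.

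The heart of the argument is the treatment of these spatial face integrals. Summing the combined integrand over all elements and collecting, for each interior face $\f\in\Fih$, the two contributions attached to its adjacent elements, I would substitute the central fluxes $\E^*=\{\E\}$, $\H^*=\{\H\}$ and use $\n_2=-\n_1$ together with the average/jump identities; the two contributions then cancel. This is the quadratic analogue of step~(ii) in the proof of Theorem~\ref{thm:consistency} and expresses that central fluxes neither create nor destroy energy. On each boundary face $\f\in\Fdh$ I would insert the fluxes $\H^*=\H$ and $\E^*=\beta\,\n\times(\H\times\n)$; the tangential component of $\H$ survives and yields the dissipative boundary term $\int_{\dO\times\In}\beta|\n\times\H_h^n|^2$, exactly as in the continuous identity where \eqref{eq:maxwell4} is used to rewrite the energy flux $\n\times\E\cdot\H$, while the excitation $\int_{\dO\times\In}\n\times\g\cdot\H_h^n$ is supplied by the boundary term of $R^n$. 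This face bookkeeping is the step I expect to be the main obstacle, since it is easy to mishandle the orientation of the normals and the identity $(\n\times\ve)\cdot\vh=-(\n\times\vh)\cdot\ve$.

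It remains to process the temporal terms. The diagonal contribution $\sum_\K\int_\K\eps|\E_h^n(\ut)|^2+\mu|\H_h^n(\ut)|^2$ from $B^n$ must be matched against the cross term $\int_\Omega\eps\,\E_h^{n-1}(\ut)\cdot\E_h^n(\ut)+\mu\,\H_h^{n-1}(\ut)\cdot\H_h^n(\ut)$ supplied by $R^n$. Completing the square by $\tfrac12\|\eps^{1/2}\E_h^n(\ut)\|_\Omega^2-\int_\Omega\eps\,\E_h^{n-1}(\ut)\cdot\E_h^n(\ut)=\tfrac12\|\eps^{1/2}(\E_h^n(\ut)-\E_h^{n-1}(\ut))\|_\Omega^2-\tfrac12\|\eps^{1/2}\E_h^{n-1}(\ut)\|_\Omega^2$, together with the analogous identity for $\mu$ and $\H$, produces precisely the numerical-dissipation terms $-\tfrac12\|\eps^{1/2}(\E_h^n(\ut)-\E_h^{n-1}(\ut))\|_\Omega^2$ and $-\tfrac12\|\mu^{1/2}(\H_h^n(\ut)-\H_h^{n-1}(\ut))\|_\Omega^2$ of \eqref{eq:energy}, together with the energy $\tfrac12(\|\eps^{1/2}\E_h^{n-1}(\ut)\|_\Omega^2+\|\mu^{1/2}\H_h^{n-1}(\ut)\|_\Omega^2)$ carried over from the previous slab, which furnishes the lower limit of the energy difference on the left of \eqref{eq:energy}. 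Collecting the endpoint energies, the boundary terms, and these temporal contributions and moving the dissipative and excitation terms to the right-hand side then yields \eqref{eq:energy}.
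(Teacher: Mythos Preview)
Your proposal is correct and follows essentially the same route as the paper: test \eqref{eq:stdg} with $(\ve,\vh)=(\E_h^n,\H_h^n)$, evaluate $B^n(\E_h^n,\H_h^n;\E_h^n,\H_h^n)$ via the same elementwise integration-by-parts and interface bookkeeping that underlies Theorem~\ref{thm:coercivity} (the paper simply quotes that computation rather than repeating it), and handle the cross term in $R^n$ by the polarization identity $uv=\tfrac12 u^2+\tfrac12 v^2-\tfrac12(u-v)^2$, which is exactly your completion-of-the-square step.
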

\begin{proof} 
The result follows from testing \eqref{eq:stdg} with $\ve=\E_h^n$ and $\vh=\H_h^n$ and elementary manipulations. 
For convenience of the reader, a detailed proof is given in the appendix. 
\end{proof}

\begin{remark}
The terms in the second line of the discrete energy relation \eqref{eq:energy} 
amount to some artificial numerical dissipation which is due to the implicit nature of the time discretization. 
This could be circumvented by employing continuous approximations in time \cite{Fezoui2005,Lilienthal14,Monk14}.
The jump terms can also be interpreted as a penalization of the discontinuities in time 
that provide some extra stability.
For higher approximation order, the amount of numerical dissipation is negligible and does not negatively affect the approximation order of the scheme.   
Numerical fluxes of the form 
$\E^* = \{\E\} - \gamma [\n \times \H]$ and $\H^* = \{\H\} + \delta [\n \times \E]$,
would give rise to additional dissipative terms of the form
$
\sum_{f \in \Fih} \gamma \| \n \times \H \|^2_{f \times \In} + \delta \|\n \times \E\|^2_{f \times \In}.
$
As has been shown in \cite{Petersen:2009id,Lilienthal14,Monk14}, these additional dissipative terms may have a positive influence 
on the convergence order with respect to the spatial mesh size. 
\end{remark}

\section{Trefftz polynomials} \label{sec:trefftz}

A standard choice for approximation spaces in space-time discontinuous Galerkin methods 
consists of piecewise tensor-product polynomials \cite{Monk14,Lilienthal14}. 
In this paper, we will utilize only the subspace of the full polynomial space consisting of 
those polynomials which satisfy Maxwell's equations exactly. 
In the following, we formally introduce these spaces of Trefftz polynomials, and we summarize some of their basic properties.

\subsection{Construction of Trefftz polynomials}

Let $\K \in \Oh$ be an element of the mesh and $\In = [\ut,\ot]$ be some time interval of our discretization.
We assume that the parameters $\eps$ and $\mu$ are constant on $\K \times \In$. 
Let $D \subset \RR^d$ and $\PP_p(D)$ be the space of all polynomials on $D$ of degree less or equal to $p$. We denote by 
\begin{align} \label{eq:trefttz}
\TT_p(\K \times \In) = \{(\E,\H) \in \PP_{p}(\K \times I)^6 : \eps \dt \E - \curl \H = 0, \ \mu \dt \H + \curl \E = 0\}
\end{align}
the space of \emph{Trefftz polynomials}, i.e., of vector valued polynomials satisfying Maxwell's equations \eqref{eq:maxwell1}--\eqref{eq:maxwell2} 
on $\K \times \In$. 
Note that any element of $\TT_p(\K \times \In)$ has six coupled electromagnetic field components.
The following characterization will be the starting point for a systematic construction of a basis for $\TT_p(\K \times \In)$.
\begin{theorem}[Characterization] \label{thm:rep} $ $ \\
For any $\widetilde \E,\widetilde \H \in \PP_p(\K)$ there is a unique $(\E,\H) \in \TT_{p}(\K \times \In)$ with $\E(\ut)=\widetilde \E$, $\H(\ut) = \widetilde \H$. 
\end{theorem}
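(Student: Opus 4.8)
The plan is to treat Maxwell's equations on $\K \times \In$ as an evolution problem in time and to solve the resulting polynomial Cauchy problem by a terminating time-Taylor expansion. Introducing the shifted variable $s = t - \ut$, I would make the ansatz
\begin{align*}
\E(x,s) = \sum_{k=0}^{p} \E_k(x)\,\frac{s^k}{k!},
\qquad
\H(x,s) = \sum_{k=0}^{p} \H_k(x)\,\frac{s^k}{k!},
\end{align*}
where the coefficients are required to satisfy $\E_k,\H_k \in \PP_{p-k}(\K)$. This degree prescription is exactly equivalent to $(\E,\H) \in \PP_p(\K \times \In)^6$: each term $\E_k(x)\,s^k/k!$ then has total space-time degree at most $(p-k)+k = p$, and conversely the coefficient of $s^k$ in any polynomial of total degree $\le p$ lies in $\PP_{p-k}(\K)$.

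Next I would substitute this ansatz into the two Maxwell equations $\eps\dt\E - \curl\H = 0$ and $\mu\dt\H + \curl\E = 0$ and compare coefficients of $s^k/k!$. Since $\dt$ shifts the index by one while $\curl$ acts only in the spatial variables, this yields the recursion
\begin{align*}
\eps\,\E_{k+1} = \curl\H_k,
\qquad
\mu\,\H_{k+1} = -\curl\E_k,
\qquad k = 0,\ldots,p-1.
\end{align*}
Prescribing the Cauchy data $\E_0 = \widetilde\E$ and $\H_0 = \widetilde\H$ at $s=0$, the recursion determines all remaining coefficients uniquely. Because $\curl$ lowers the polynomial degree by one, a short induction gives $\E_k,\H_k \in \PP_{p-k}(\K)$ automatically, so the constructed pair indeed lies in $\PP_p(\K\times\In)^6$; this produces a candidate and establishes existence. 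Uniqueness is then immediate, since the time-Taylor coefficients of \emph{any} $(\E,\H)\in\TT_p(\K\times\In)$ with the prescribed values at $\ut$ must obey the same recursion with the same starting data.

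The one point that requires genuine care — and the only place where the statement could conceivably fail — is the bookkeeping at the top order. The recursion matches the equations only for $k = 0,\ldots,p-1$, so I would separately inspect the coefficient of $s^p$: here $\eps\dt\E$ contributes no $s^p$ term while $\curl\H$ contributes $\curl\H_p$, and likewise the $s^p$ coefficient of $\mu\dt\H + \curl\E$ reduces to $\curl\E_p$. Since $\E_p,\H_p \in \PP_0(\K)$ are constant in space, both curls vanish and the top-order equations hold automatically. This is precisely what makes the ansatz close within $\PP_p$ without spilling into degree $p+1$. I expect no serious obstacle beyond this degree count; the argument is a finite, polynomial version of the Cauchy--Kovalevskaya construction, and it simultaneously exhibits the linear isomorphism $(\widetilde\E,\widetilde\H)\mapsto(\E,\H)$ from $\PP_p(\K)^6$ onto $\TT_p(\K\times\In)$ that will underlie the explicit basis construction in the next subsection.
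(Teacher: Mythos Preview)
Your proposal is correct and follows essentially the same approach as the paper: a time-Taylor expansion about $\ut$ whose coefficients are determined recursively from Maxwell's equations, with the spatial degree dropping by one at each step. Your version is in fact more careful than the paper's, since you explicitly verify the top-order coefficient of $s^p$ (where $\curl\E_p=\curl\H_p=0$ because $\E_p,\H_p\in\PP_0(\K)$) and spell out the equivalence between the degree constraint $\E_k,\H_k\in\PP_{p-k}(\K)$ and membership in $\PP_p(\K\times\In)^6$; the paper leaves both of these points implicit.
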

\begin{proof}
Let $(\E,\H)$ be in $\TT_p(\K \times \In)$. Then $\E$ and $\H$ can be expanded as 
\begin{align*}
\E(x,t) = \sum_{m=0}^p \e_{m}(x) (t-\ut)^m, \qquad \H(x,t) = \sum_{m=0}^p \h_{m}(x) (t-\ut)^m
\end{align*}
with $\e_{m}$, $\h_{m} \in \PP_{p-m}^3(\K)$. 
From $\E(\ut) = \widetilde \E$ and $\H(\ut) = \widetilde \H$, we directly obtain
$$
\e_{0}(x)=\E(x,\ut)
\quad \text{and} \quad  
\h_0(x)=\H(x,\ut).
$$ 
Inserting the expansion into the Maxwell equations and comparing powers of $t$, we get 
\begin{align} \label{eq:trefftz}
m \eps \e_{m} = \curl \h_{m-1}
\qquad \text{and} \qquad 
m \mu \h_{m} = -\curl \e_{m-1}, 
\end{align}
for all $m=1,\ldots,k$. This allows us to compute $e_{m}$ and $\h_{m}$ recursively. 
Note that $e_{2l}$ and $h_{2l+1}$ depend only on $\bar \E$ while $e_{2l+1}$ and $h_{2l}$ depend only on $\bar \H$.
\end{proof}

\begin{remark}
The proof of Theorem~\ref{thm:rep} also provides a constructive way to efficiently generate a basis for the space of Trefftz polynomials. One only has to choose a Trefftz basis for the initial values $(\widetilde \E,\widetilde \H) \in \PP_p(\K)^6$ and then propagate the fields in time. 
The construction also reveals that the Trefftz polynomials have coupled electric and magnetic components, in general.
In particular, non-constant functions of the form $(\E,\zero)$ or $(\zero,\H)$ do not lie in $\TT_p(\K \times \In)$.
\end{remark}
As a direct consequence of the previous characterization, we obtain
\begin{theorem}
$\dim \TT_{p}(\K \times \In) = (p+3)(p+2)(p+1)$. 
\end{theorem}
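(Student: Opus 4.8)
The plan is to compute the dimension by counting free parameters in the initial data, using the bijection established in Theorem~\ref{thm:rep}. That theorem provides a linear isomorphism between $\TT_p(\K \times \In)$ and the pairs of initial fields $(\widetilde\E, \widetilde\H) \in \PP_p(\K)^3 \times \PP_p(\K)^3$, so the dimension of the Trefftz space equals the dimension of the space of admissible initial data. Thus the task reduces entirely to counting $\dim \PP_p(\K)^6 = 6 \dim \PP_p(\K)$.

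First I would recall that for polynomials in three spatial variables of total degree at most $p$, the dimension of the scalar space is the binomial coefficient $\dim \PP_p(\K) = \binom{p+3}{3} = \frac{(p+1)(p+2)(p+3)}{6}$. This is the standard count of monomials $x^\alpha$ with $|\alpha| \le p$ in three variables, which equals the number of lattice points in the simplex, namely $\binom{p+3}{3}$. Multiplying by $6$ for the six scalar components of the pair $(\widetilde\E,\widetilde\H)$ gives
\begin{align*}
\dim \TT_p(\K \times \In) = 6 \cdot \binom{p+3}{3} = 6 \cdot \frac{(p+1)(p+2)(p+3)}{6} = (p+1)(p+2)(p+3),
\end{align*}
which matches the claimed formula after reordering the factors.

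The only subtlety, and the step I would treat most carefully, is confirming that the map from $\TT_p(\K \times \In)$ to the initial data is genuinely a \emph{bijection} onto the \emph{full} space $\PP_p(\K)^6$, rather than a proper subspace. Theorem~\ref{thm:rep} already asserts existence and uniqueness of a Trefftz polynomial for \emph{any} prescribed initial pair $(\widetilde\E,\widetilde\H)$, so surjectivity onto $\PP_p(\K)^6$ is guaranteed: every admissible initial datum is hit, and no Maxwell-type constraint (such as a divergence condition) cuts down the space of allowable initial data. Injectivity likewise follows from the uniqueness half of the theorem. Hence the evaluation map $(\E,\H) \mapsto (\E(\ut),\H(\ut))$ is a linear isomorphism, and the dimensions agree.

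I anticipate no real obstacle here, since all the work is done by Theorem~\ref{thm:rep}; the proof is essentially a one-line invocation of that isomorphism followed by the elementary monomial count. The one point worth stating explicitly in the write-up is that the spatial polynomial degree is the total degree $p$ in the three coordinates of $\K \subset \RR^3$, so the relevant binomial coefficient is $\binom{p+3}{3}$ and not some lower-dimensional analogue; getting this combinatorial factor right is what produces the precise cubic polynomial in the statement.
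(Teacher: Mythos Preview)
Your proposal is correct and follows essentially the same route as the paper: invoke the bijection from Theorem~\ref{thm:rep} between $\TT_p(\K\times\In)$ and $\PP_p(\K)^6$, then use $\dim\PP_p(\K)=(p+3)(p+2)(p+1)/6$ and multiply by six. The paper's proof is terser, but your explicit discussion of why the evaluation map is a genuine linear isomorphism (surjectivity from the existence half, injectivity from the uniqueness half) is a welcome clarification.
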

\begin{proof}
The previous lemma shows that any $(\E,\H) \in \TT_{p}(\K \times \In$ can be represented uniquely by its initial values $(\E(\ut),\H(\ut))=(\widetilde \E,\widetilde \H) \in \PP_{p}(\K)^6$.
The assertion then follows by noting that $\dim \PP_{p}(\K) = (p+3)(p+2)(p+1)/6$ and counting the dimensions.
\end{proof}

The following stability estimate will allow us in the next section to to prove coercivity of the bilinear form $B^n$ and thus to ensure the well-posedness of the space-time discontinuous Galerkin method based on Trefftz polynomials.

\begin{theorem} \label{thm:norm}
Let $\K \subset \Oh$ be an element of the mesh and $\K \times \In$ denote the corresponding space-time element.
Then for all $(\E_h^n,\H_h^n) \in \TT_p(\K \times \In)$ there holds 
\begin{align*}
 \eps \|\E_h^n\|^2_{\K \times \In} + \mu \|\H_h^n\|^2_{\K \times \In} 
 &\le C(p,\K,\In) \; \big( \eps \|\E(\ut)\|^2_{\K} + \mu \|H(\ut)\|^2_{\K}\big)
\end{align*}
with a constant $C(p,\K,\In)$ only depending on the polynomial degree, the spatial element, the size of the time interval,
and the material parameters.
\end{theorem}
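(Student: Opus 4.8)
The plan is to exploit the characterization of Theorem~\ref{thm:rep}, which identifies $\TT_p(\K \times \In)$ with the finite-dimensional space of initial data $\PP_p(\K)^6$ through a linear bijection $\Phi:(\widetilde{\E},\widetilde{\H}) \mapsto (\E,\H)$. The left-hand side, $(\E,\H) \mapsto \eps\|\E\|^2_{\K\times\In} + \mu\|\H\|^2_{\K\times\In}$, is the square of a genuine norm on $\TT_p(\K\times\In)$, since a nonzero polynomial has strictly positive space-time $L^2$ norm. The right-hand side is the square of the seminorm $(\E,\H) \mapsto \eps\|\E(\ut)\|^2_{\K} + \mu\|\H(\ut)\|^2_{\K}$; by the uniqueness in Theorem~\ref{thm:rep}, vanishing initial data force $(\E,\H)=(\zero,\zero)$, so this is in fact also a norm. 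Two norms on a finite-dimensional vector space are equivalent, and the stated inequality follows at once with a finite constant. Since $\dim\TT_p(\K\times\In)$ depends only on $p$, while the two norms depend only on $\K$, $\In$, and the material parameters, the constant $C(p,\K,\In)$ inherits precisely the advertised dependencies.

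While this abstract argument already closes the proof, I would also sketch a constructive bound that renders the dependence explicit, which is useful for the application in Section~\ref{sec:stdgt}. Using the expansion $\E = \sum_{m=0}^p \e_m (t-\ut)^m$, $\H = \sum_{m=0}^p \h_m (t-\ut)^m$ from the proof of Theorem~\ref{thm:rep}, together with the recursion $m\eps\,\e_m = \curl\h_{m-1}$ and $m\mu\,\h_m = -\curl\e_{m-1}$, every coefficient is controlled by the initial data $\e_0=\widetilde{\E}$, $\h_0=\widetilde{\H}$. Invoking a polynomial inverse inequality $\|\curl \mathbf p\|_{\K} \le C_{\mathrm{inv}}(p,\K)\,\|\mathbf p\|_{\K}$ on $\PP_p(\K)^3$, the recursion yields $\|\e_m\|_{\K} \le (C_{\mathrm{inv}}/m\eps)\,\|\h_{m-1}\|_{\K}$ and $\|\h_m\|_{\K} \le (C_{\mathrm{inv}}/m\mu)\,\|\e_{m-1}\|_{\K}$, so by induction each $\|\e_m\|_{\K}$, $\|\h_m\|_{\K}$ is bounded by a constant times $\|\widetilde{\E}\|_{\K} + \|\widetilde{\H}\|_{\K}$. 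Integrating the temporal monomials via $\int_{\In}(t-\ut)^{2m}\,dt = (\triangle\ot)^{2m+1}/(2m+1)$ and summing then gives the estimate with a constant depending explicitly on $p$, on $C_{\mathrm{inv}}(p,\K)$, on the time-step length $\triangle\ot$, and on the material parameters.

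The routine parts are the inverse inequality and the monomial integrals; the point that demands care is the interplay of the two material parameters in the coupled recursion, since each step trades an electric coefficient for a magnetic one and back, so the accumulated constant involves $\eps$ and $\mu$ only through their combination. Keeping track of this coupling is the main bookkeeping obstacle in the constructive route. For the abstract route, the only real subtlety is verifying that the right-hand quantity is a norm and not merely a seminorm, which is exactly the uniqueness furnished by Theorem~\ref{thm:rep}; once this is in place, finite-dimensional norm equivalence finishes the proof with no further estimation.
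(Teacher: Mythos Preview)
Your proof is correct, but the paper takes a different route. Rather than invoking finite-dimensional norm equivalence or the explicit recursion for the Taylor coefficients, the paper argues directly from the fact that $(\E,\H)$ solves Maxwell's equations on $\K$: computing $\tfrac{1}{2}\tfrac{d}{dt}\big(\eps\|\E\|_\K^2+\mu\|\H\|_\K^2\big)$ yields the boundary flux $(\n\times\H,\E)_{\dK}$, which is bounded by the discrete trace inequality and Young's inequality as $c\big(\eps\|\E\|_\K^2+\mu\|\H\|_\K^2\big)$. Gronwall then gives a pointwise-in-time bound $\eps\|\E(t)\|_\K^2+\mu\|\H(t)\|_\K^2 \le e^{c(t-\ut)}\big(\eps\|\E(\ut)\|_\K^2+\mu\|\H(\ut)\|_\K^2\big)$, and integration over $\In$ finishes.

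Your abstract argument is the shortest and is perfectly adequate for the qualitative statement, since uniqueness in Theorem~\ref{thm:rep} is exactly what makes the right-hand side a norm. Your constructive sketch via the curl inverse inequality and the recursion is closest in spirit to the paper's, but you iterate the recursion coefficient by coefficient whereas the paper packages everything into a single differential inequality. The paper's energy--Gronwall route has two advantages: it stays entirely within the PDE framework (no separate handling of each Taylor mode), and the constant it produces, essentially $e^{c\,\triangle\ot}\,\triangle\ot$ with $c$ the trace constant scaling like $p^2/h$, immediately explains the remark following the theorem about boundedness under the CFL-type restriction $\triangle t \lesssim h/p^2$. Your constructive route could recover the same scaling, but only after summing the geometric-type series in the iterated inverse constants; the abstract route gives no such information at all.
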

\begin{proof}
The usual energy argument yields 
\begin{align*}
&\frac{1}{2} \frac{d}{dt} \Big( \eps \|\E\|^2_K + \mu \|\H\|_K^2 \big) 
= (\curl \H, \E)_K - (\curl \E, \H)_K \\
&\qquad \qquad = (\n \times \H, \E)_{\dK}
 \le \|\H\|_\dK \|\E\|_{\dK} 
\le c \big( \eps \|\E\|^2_K + \mu \|\H\|_K^2 \big).  
\end{align*}
For the last estimate, we used a discrete trace inequality \cite{WarHes08} and Young's inequality. 
By employing Gronwall's lemma, we then obtain 
\begin{align*}
\eps \|\E(t)\|^2_K + \mu \|\H(t)\|_K^2 
&\le e^{c|t-\ut|} \big( \eps \|\E(\ut)\|^2_K + \mu \|\H(\ut)\|_K^2 \big),
\end{align*}
The assertion of the theorem now follows by integration with respect to the time variable. 
\end{proof}
\begin{remark}
It is possible to explicitly describe the dependence of $C(p,\K,\In)$ on the polynomial degree and on the spatial and temporal mesh size. For $\triangle t \le c p^2/h$, the constant can be shown to be bounded independent of the meshsize. 
\end{remark}

Before we proceed, let us also briefly discuss related constructions of approximation spaces,
which may be useful in practice and which will actually be used in our numerical experiments.

\subsection{Incorporation of divergence constraints}

Assume that a Trefftz function additionally satisfies
$\div \eps \E(\ut) = \div \eps \H(\ut)=0$. 
Then, by taking the divergence of \eqref{eq:maxwell1} and \eqref{eq:maxwell2}, 
we conclude that 
\begin{align} \label{eq:gauge}
\div \eps \E = 0, \qquad \div \mu \H = 0 \qquad  \text{on } \K \times \In.
\end{align}
These constraints, which express the absence of electric charges and magnetic monopoles,
can easily be incorporated in the construction of the local Trefftz polynomials.
Following the arguments of the construction in the previous section, we obtain
\begin{theorem}
Denote the space of divergence free local  Trefftz polynomials by
\begin{align*}
\widetilde \TT_p(\K \times \In) 
&= \{(\E,\H) \in \TT_p(\K \times \In): \div \eps \E = 0, \ \div \mu \H=0\}
\end{align*}
Then $\dim \widetilde \TT_{p}(\K \times \In) = \frac{1}{3} (p+1)(p+2)(2p+9)$. 
\end{theorem}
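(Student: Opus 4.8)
The plan is to reduce the dimension count to a purely spatial question about divergence-free vector polynomials and then apply the rank--nullity theorem. First I would invoke the characterization in Theorem~\ref{thm:rep}, which identifies every element of $\TT_p(\K \times \In)$ uniquely and linearly with its initial data $(\widetilde\E,\widetilde\H) = (\E(\ut),\H(\ut)) \in \PP_p(\K)^6$. The key observation is that, since $\eps$ and $\mu$ are constant on $\K$, membership in $\widetilde\TT_p(\K \times \In)$ is equivalent to the purely spatial constraints $\div\widetilde\E = 0$ and $\div\widetilde\H = 0$ on this initial data. One implication is trivial: restricting the space-time constraint to $t = \ut$ gives the constraint on the initial data. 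The converse is exactly the propagation argument already indicated above, namely that $\dt(\div\eps\E) = \div(\eps\dt\E) = \div \curl \H = 0$, so $\div\eps\E$ is constant in time and hence vanishes throughout $\K \times \In$ whenever it vanishes at $\ut$ (and similarly for $\H$). This establishes a linear isomorphism between $\widetilde\TT_p(\K \times \In)$ and the product of two copies of the space $W := \{\widetilde\E \in \PP_p(\K)^3 : \div\widetilde\E = 0\}$, whence $\dim\widetilde\TT_p(\K \times \In) = 2\dim W$.

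Next I would compute $\dim W$ via rank--nullity applied to the linear map $\div : \PP_p(\K)^3 \to \PP_{p-1}(\K)$, which gives $\dim W = \dim\PP_p(\K)^3 - \mathrm{rank}(\div)$. Using $\dim\PP_p(\K) = \binom{p+3}{3}$ and $\dim\PP_{p-1}(\K) = \binom{p+2}{3}$, the entire problem reduces to determining the rank of the divergence operator on vector polynomials.

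The one step requiring a genuine argument, and the main (though mild) obstacle, is to show that $\div$ maps $\PP_p(\K)^3$ onto all of $\PP_{p-1}(\K)$. This is settled directly: given $q \in \PP_{p-1}(\K)$, the vector field $(Q,0,0)$, with $Q$ an antiderivative of $q$ in the first coordinate direction, lies in $\PP_p(\K)^3$ and satisfies $\div(Q,0,0) = q$. Hence $\div$ is surjective, $\mathrm{rank}(\div) = \dim\PP_{p-1}(\K)$, and
\begin{align*}
\dim W = 3\binom{p+3}{3} - \binom{p+2}{3} = \frac{(p+1)(p+2)(2p+9)}{6}.
\end{align*}
Doubling this for the two field components then yields $\dim\widetilde\TT_p(\K \times \In) = \tfrac{1}{3}(p+1)(p+2)(2p+9)$, as claimed. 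The remaining work is only the elementary simplification of the binomial coefficients, which I would not spell out.
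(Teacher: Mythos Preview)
Your proof is correct and follows essentially the same strategy as the paper: reduce the divergence constraints to constraints on the initial data via the propagation argument (which the paper records just before the theorem) and Theorem~\ref{thm:rep}, then count dimensions using that $\div:\PP_p(\K)^3\to\PP_{p-1}(\K)$ is surjective. Your version is more explicit than the paper's---you supply a direct surjectivity argument via an antiderivative and spell out the rank--nullity step---but the underlying idea is identical.
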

\begin{proof}
Note that $\div \PP_p(\K\times\In)^3 = \PP_{p-1}(\K\times\In)$ and that $\dim \PP_{p-1}(\K \times \In) = \tfrac{1}{6}p(p+1)(p+2)$.
The two constraint conditions thus yield $p(p+1)(p+2)/3$ additional constraints, 
and the result follows by counting of the dimensions.
\end{proof}

\begin{remark}
A systematic construction of a basis for $\widetilde \TT_p(\K \times \In)$ can be done as follows: 
(i) Choose a basis for $(\E(0),\H(0))$ spanning $\{(\e,\h) \in \PP_p(\K) : \div \e = 0, \ \div \h=0\}$. 
This can be achieved by taking curls of appropriate polynomials of order $p+1$. 
(ii) Extend this polynomial basis for the initial values $t=0$ to the space-time element $\K \times \In$ 
by solving Maxwell's equations, which can be achieved utilizing recurrence relations similar to those used in the proof of Theorem~\ref{thm:rep}. 
An explicit construction of a basis for $\TT_{p}(\K \times \In)$ consisting of \emph{polynomial plane waves} has been 
given in \cite{trefftz_abc}.
\end{remark}

\subsection{Lower dimensional approximations} \label{sec:lda}

Under symmetry assumptions, Maxwell's equations can be reduce to a simpler setting. 
For illustration and later reference, let us consider one such case in more detail. 
This will also be the setting for our numerical tests in Section~\ref{sec:numerics}.

Assume that the domain and the fields are homogeneous in the $z$-direction and that the 
electric field is polarized in this direction. 
The electromagnetic fields then have the form
$\H=\left(\Hup_1,\Hup_2,0\right)$ and $\E = \left(0,0,\Eup_3\right)$ with $\Hup_1$, $\Hup_2$, and $\Eup_3$ independent of $z$.
This setting is known as the TM mode in electrical engineering. 
We then define 
\begin{align*} 
\TT_p^{2D}\left(\K \times \In\right)&= \big{\{}\left(\E,\H\right)  \in \TT_p\left(\K \times \In\right) : \E=\left(0,0,\Eup_3\right), \ \H=\left(\Hup_1,\Hup_2,0\right)\big{\}}
\end{align*}
with components $\Hup_1,\Hup_2,\Eup_3$ independent of $z$. 
The superscript $^{2D}$ is used here to distinguish this setting from the general three-dimensional case.
Similar as before, we also consider the space $\widetilde \TT_p^{2D}(\K \times \In) = \{(\E,\H) \in \TT_p^{2D}(\K \times \In) : \ \div \H = 0\}$ of the corresponding divergence free Trefftz polynomials. 
Note that the divergence free condition on $\E$ is satisfied automatically. 
The construction of a basis for the polynomial Trefftz spaces can now be done similar to the general case, and we obtain
\begin{theorem}
$\dim \TT_p^{2D}(\K \times \In) = \frac{3}{2}(p+1)(p+2)$ and $\dim \widetilde \TT_p^{2D}(\K \times \In) = (p+1)(p+3)$. 
\end{theorem}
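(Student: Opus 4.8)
The plan is to mimic the dimension count already carried out for the full three-dimensional spaces, but now restricted to the reduced $2D$ setting. The key observation is that Theorem~\ref{thm:rep} applies verbatim to the reduced setting: a Trefftz polynomial in $\TT_p^{2D}(\K \times \In)$ is uniquely determined by its initial data $(\E(\ut),\H(\ut))$, so I only need to count the degrees of freedom available in the initial data once the structural constraints $\E=(0,0,\Eup_3)$ and $\H=(\Hup_1,\Hup_2,0)$ are imposed. First I would observe that the free scalar components of the initial data are $\Eup_3(\ut)$, $\Hup_1(\ut)$, and $\Hup_2(\ut)$, each a polynomial in the two spatial variables $(x,y)$ of degree at most $p$, so each lives in $\PP_p$ of a two-dimensional domain.

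Next I would recall that $\dim \PP_p(\RR^2) = \tfrac{1}{2}(p+1)(p+2)$, since a bivariate polynomial of total degree at most $p$ has one coefficient for each monomial $x^iy^j$ with $i+j\le p$. With three independent scalar fields, the dimension of the admissible initial data is $3 \cdot \tfrac{1}{2}(p+1)(p+2) = \tfrac{3}{2}(p+1)(p+2)$. Since the time-propagation map furnished by Theorem~\ref{thm:rep} is a bijection between admissible initial data and elements of $\TT_p^{2D}(\K \times \In)$, this yields the first claim $\dim \TT_p^{2D}(\K \times \In) = \tfrac{3}{2}(p+1)(p+2)$.

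For the divergence-free space $\widetilde\TT_p^{2D}(\K \times \In)$ I would count the additional constraints imposed by $\div \H = 0$ (the analogous condition on $\E$ being automatic, as noted in the text, since $\Eup_3$ is independent of $z$). The constraint $\partial_x \Hup_1 + \partial_y \Hup_2 = 0$ on the initial data is a single scalar equation; applying $\div$ to a vector field in $\PP_p(\RR^2)^2$ lands in $\PP_{p-1}(\RR^2)$, so it imposes $\dim \PP_{p-1}(\RR^2) = \tfrac{1}{2}p(p+1)$ linear conditions. Subtracting gives
\begin{align*}
\dim \widetilde\TT_p^{2D}(\K \times \In) = \tfrac{3}{2}(p+1)(p+2) - \tfrac{1}{2}p(p+1) = (p+1)(p+3),
\end{align*}
after a short simplification, matching the stated formula.

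The main point requiring care is verifying that the $\div \H = 0$ constraint on the \emph{initial} data is both necessary and sufficient, and that it is surjective onto $\PP_{p-1}$ so that no conditions are redundant. For sufficiency I would invoke the argument of Section~\ref{sec:trefftz}: imposing the divergence condition at $t=\ut$ forces it to hold throughout $\K \times \In$ by the Gauss-law propagation \eqref{eq:gauge}, so restricting the initial data suffices. Surjectivity of the divergence map $\PP_p^2 \to \PP_{p-1}$ is standard (e.g.\ $\Hup_1 = P$, $\Hup_2 = 0$ realizes any target $\partial_x P$, and antiderivatives in $x$ cover all of $\PP_{p-1}$), ensuring the constraint count is exact rather than an upper bound. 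This is the only step where one must guard against over- or under-counting; the remaining arithmetic is routine.
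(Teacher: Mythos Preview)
Your proposal is correct and follows exactly the approach the paper intends: the paper's own proof says only ``The proof of these assertions follows similar to that of Theorem~\ref{thm:rep} by counting arguments,'' and you have carried out precisely that counting. Your additional care in verifying surjectivity of the divergence map and propagation of the constraint from $t=\ut$ to all of $\K\times\In$ goes beyond what the paper spells out, but is the right thing to check.
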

The proof of these assertions follows similar to that of Theorem~\ref{thm:rep} by counting arguments.
\begin{remark}
Since we assumed homogeneity of the domain in $z$-direction, 
we can express $\Omega = \Omega' \times Z$ with $\Omega' \subset \RR^2$ and $Z$ being some interval. It is then natural to consider a tensor product mesh with elements 
$\K = \K' \times Z$ where $\K' \subset \Omega'$ is an element of a partition of $\Omega'$.
For the actual implementation, it therefore suffices to consider spatial meshes in two dimensions.
\end{remark}

Assuming homogeneity in two coordinate directions would allow to reduce Maxwell's equations to 
a one-dimensional setting; see \cite{Kretzschmar14} for details about corresponding results.

\section{The space-time discontinuous Galerkin Trefftz method} \label{sec:stdgt}

We will now utilize the Trefftz polynomials for the local approximation in the space-time discontinuous Galerkin framework introduced in Section~\ref{sec:stdg}. 
We therefore choose
$$
(\V_E^n, \V_H^n) :=  \TT_p(\Oh \times \In) := \{ v \in L^2(\Omega \times \In) : v |_{\K \times \In} \in \TT_p(\K \times \In)\},
$$
i.e., we approximate the Fields by of piecewise Trefftz polynomials of order $p$. 
One might as well use one of the other polynomial Trefftz spaces introduced in the previous section.
The special properties of the Trefftz polynomials leads to some simplifications in the formulation of the space-time discontinuous Galerkin method, which we explain next.

\subsection{Space-time discontinuous Galerkin Trefftz method} 

Since piecewise Trefftz polynomials satisfy the Maxwell equations on every element, 
the volume terms in the definition of $B^n$ in Method~\ref{meth:stdg} drop out. 
By the usual rearrangement of the interface terms \cite{WarHes08}, we obtain 
\begin{align} \label{eq:identity}
&
\sum_{\K \in \Oh} \int_{\dK}  \n \times (\H - \H^*) \cdot \ve - \n \times (\E-\E^*) \cdot \vh \\
&
\qquad \qquad =  \sum_{\f \in \Fih} \int_\f [n \times \H] \cdot \{\ve\} - [\n \times \E] \cdot \{\vh\} 
 + \sum_{\f \in \Fdh} \int_\f  \beta (\n \times \H) (\n \times \vh) - \n \times \E \cdot \vh. \notag
\end{align}
A detailed derivation is given in the appendix. 
When using piecewise Trefftz polynomials as approximation spaces, the abstract space-time discontinuous Galerkin method of Section~\ref{sec:stdg} can therefore be rephrased equivalently as follows.

\begin{method}[Space-time discontinuous Galerkin Trefftz method] \label{meth:stdgt} $ $\\
Set $\E_h^0=\E^0$, $\H_h^0=\H^0$. For $n \ge 1$ find $(\E_h^n,\H_h^n) \in \TT_p(\Oh \times \In)$ such that 
\begin{align} \label{eq:stdgt}
B^n(\E_h^n,\H_h^n;\ve,\vh) = R^n(\E_h^{n-1},\H_h^{n-1};\ve,\vh) 
\end{align}
for all $(\ve,\vh) \in \TT_p(\Oh \times \In)$ with $B^n$ and $R^n$ defined by
\begin{align*}
&B^n(\E,\H;\ve,\vh) = \\
& \qquad \quad   \sum_{\f \in \Fih} \int_{\f \times \In}  [n \times \H] \cdot \{\ve\} - [\n \times \E] \cdot \{\vh\} &&& \text{(spatial interface terms)} \\
& \qquad + \sum_{\f \in \Fdh} \int_{\f \times \In} \beta (\n \times \H)\cdot (\n \times \vh) - \n \times \E \cdot \vh &&& \text{(boundary terms)} \\
& \qquad + \sum_{\K \in \Oh} \int_\K \eps \E(\ut) \cdot \ve(\ut) + \mu \H (\ut) \cdot\vh(\ut) &&& \text{(temporal interface terms)}
\end{align*}
\begin{align*}
&R^n( \E, \H,\ve,\vh) = \\
& \qquad  - \sum_{\f \in \Fdh} \int_{\f \times \In} \n \times \g \cdot \vh &&& \text{(boundary terms)} \\
&  \qquad + \sum_{\K \in \Oh} \int_\K \eps  \E(\ut) \cdot \ve(\ut) + \mu  \H(\ut) \cdot\vh(\ut) &&&  \text{(temporal interface terms)}
\end{align*}
\end{method}

Note that Method~\ref{meth:stdgt} is equivalent to Method~\ref{meth:stdg} with $(\V_E^n,\V_H^n) = \TT_p(\Oh \times \In)$. We can therefore use the same symbols for the forms $B^n$ and $R^n$. 
%

\subsection{Properties of the space-time discontinuous Galerkin Trefftz method}

It remains to show that Method~\ref{meth:stdgt} is well-defined, i.e., we have to 
verify that the discrete variational problems \eqref{eq:stdgt} are uniquely solvable.
This follows from the fact that the semi-norm $\tnorm(\ve,\vh)\tnorm_{\Oh \times \In}$
used in Theorem~\ref{thm:coercivity} is actually a norm on the space of piecewise Trefftz polynomials.
\begin{theorem}
For all piecewise Trefftz-polynomials $(\ve,\vh) \in \TT_p(\Oh \times \In\}$ there holds
$$
\tnorm(\ve,\vh)\tnorm_{\Oh \times \In} \ge \sum_{\K \in \Oh}  \frac{1}{C(p,\K,\In)} \Big( \eps \|\ve\|^2_{\K \times \In} + \mu \|\vh\|^2_{\K \times \In} \Big),
$$
with constant $C(p,\K,\In)$ taken from Theorem~\ref{thm:norm}. 
\end{theorem}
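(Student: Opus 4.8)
The plan is to reduce the global estimate to the elementwise bound already established in Theorem~\ref{thm:norm} and simply to discard the nonnegative contributions appearing in the definition of the stability seminorm. First I would observe that the squared seminorm $\tnorm(\ve,\vh)\tnorm^2_{\Omega \times \In}$ is, by definition, a sum of five nonnegative terms, and in particular
$$
\tnorm(\ve,\vh)\tnorm^2_{\Omega \times \In} \ge \|\eps^{1/2}\ve(\ut)\|^2_{\Omega} + \|\mu^{1/2} \vh(\ut)\|^2_{\Omega},
$$
since the three remaining terms $\|\eps^{1/2}\ve(\ot)\|^2_\Omega$, $\|\mu^{1/2}\vh(\ot)\|^2_\Omega$, and the boundary term $2\beta\|\n\times\vh\|^2_{\partial\Omega}$ are all nonnegative (here $\beta \ge 0$ is used).

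Next I would split the integrals over $\Omega$ into contributions from the elements of the partition $\Oh$, writing
$$
\|\eps^{1/2}\ve(\ut)\|^2_{\Omega} + \|\mu^{1/2} \vh(\ut)\|^2_{\Omega} = \sum_{\K \in \Oh} \big( \eps \|\ve(\ut)\|^2_{\K} + \mu \|\vh(\ut)\|^2_{\K} \big).
$$
On each space-time element the restriction $(\ve,\vh)|_{\K \times \In}$ lies in $\TT_p(\K \times \In)$ by the very definition of the piecewise Trefftz space, so Theorem~\ref{thm:norm} applies verbatim on $\K \times \In$. Reading the estimate of Theorem~\ref{thm:norm} in the reverse direction gives, on each element,
$$
\eps \|\ve(\ut)\|^2_{\K} + \mu \|\vh(\ut)\|^2_{\K} \ge \frac{1}{C(p,\K,\In)} \big( \eps \|\ve\|^2_{\K \times \In} + \mu \|\vh\|^2_{\K \times \In} \big).
$$

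Summing this elementwise lower bound over all $\K \in \Oh$ and chaining it with the two displayed inequalities above yields the asserted estimate. I expect no genuine obstacle here: the argument is just an elementwise invocation of the already-proven Theorem~\ref{thm:norm}, combined with the observation that the omitted seminorm terms are nonnegative. The only point requiring a moment's care is the bookkeeping that the constant $C(p,\K,\In)$ depends on the individual element and therefore must remain inside the sum rather than being factored out; retaining it inside the sum is precisely what produces the stated element-by-element weighted lower bound, and an element-independent constant would only be available under the additional CFL-type restriction $\triangle t \le c\,p^2/h$ mentioned after Theorem~\ref{thm:norm}.
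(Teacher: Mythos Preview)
Your argument is correct and is exactly the paper's own proof: drop the nonnegative terms in $\tnorm(\cdot,\cdot)\tnorm_{\Omega\times\In}^2$ other than those at $t=\ut$, split over elements, invoke Theorem~\ref{thm:norm} on each $\K\times\In$, and sum. Your added remarks (that $\beta\ge 0$ is needed to discard the boundary term, and that $C(p,\K,\In)$ must stay inside the sum) are appropriate elaborations of the paper's one-sentence sketch.
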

\begin{proof}
The estimate follows directly by omitting the spatial interface terms in $\tnorm(\cdot,\cdot) \tnorm_{\Omega \times \In}$, 
applying the estimate of Theorem~\ref{thm:norm} on every element, and summing over all elements.
\end{proof}

All remarks and assertions about the abstract space-time discontinuous Galerkin method of Section~\ref{sec:stdg} 
now carry over verbatim to the Trefftz method. 
For completeness, we summarize the basic properties 

\begin{theorem} \label{thm:stdgt}
Method~\ref{meth:stdgt} is a consistent and well-defined implicit time-stepping scheme
and the  approximations $(\E_h^n,\H_h^n) \in \TT_p(\Oh \times \In)$ obtained with Method~\ref{meth:stdgt} satisfy the discrete energy dissipation relation 
\begin{align*}
& \frac{1}{2} \big(  \|\eps^{1/2} \E_h^n\|^2_\Omega + \|\mu^{1/2} \H_h^n\|_\Omega^2 \big) \Big|_{\ut}^{\ot} 
=   -\int_{\partial\Omega \times \In} \beta |\n \times \H_h^n|^2 + \n \times \g \cdot \H_h^n  \\
& \qquad \qquad  - \sum_{\K \in \Oh} \big( \tfrac{\eps}{2} \|[\E_h^n(\ut) - \E_h^{n-1}(\ut)] \|^2_\K + \tfrac{\mu}{2} \|\H_h^{n}(\ut) - \H_h^{n-1}(\ut)]\|_\K^2 \big).
\end{align*}
\end{theorem}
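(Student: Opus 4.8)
The plan is to deduce every claim of Theorem~\ref{thm:stdgt} from the abstract results of Section~\ref{sec:stdg}, exploiting that Method~\ref{meth:stdgt} coincides with Method~\ref{meth:stdg} for the particular choice $(\V_E^n,\V_H^n) = \TT_p(\Oh \times \In)$. Consistency then requires no separate argument: Theorem~\ref{thm:consistency} holds for \emph{all} piecewise smooth test functions, and piecewise Trefftz polynomials form such a space. Similarly, once unique solvability is in hand, the implicit time-stepping structure is immediate from Theorem~\ref{thm:implicit}.

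The heart of the matter is thus the well-definedness, i.e.\ unique solvability of the discrete problem \eqref{eq:stdgt}. Since trial and test space agree and $\TT_p(\Oh \times \In)$ is finite dimensional, I would reduce this to injectivity of the associated linear map. Concretely, suppose $(\ve,\vh) \in \TT_p(\Oh \times \In)$ satisfies $B^n(\ve,\vh;\cdot,\cdot) \equiv 0$; testing against $(\ve,\vh)$ itself and applying the coercivity bound \eqref{eq:stability} of Theorem~\ref{thm:coercivity} gives $\tfrac12 \tnorm(\ve,\vh)\tnorm_{\Omega \times \In}^2 \le B^n(\ve,\vh;\ve,\vh) = 0$, hence $\tnorm(\ve,\vh)\tnorm_{\Omega \times \In} = 0$. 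At this point coercivity alone is insufficient, because $\tnorm \cdot \tnorm_{\Omega \times \In}$ is only a seminorm; the decisive step is to invoke the theorem immediately preceding, which upgrades this seminorm to a genuine norm on the Trefftz space. It yields $\eps\|\ve\|^2_{\K \times \In} + \mu\|\vh\|^2_{\K \times \In} = 0$ on every element $\K$, so $(\ve,\vh)$ vanishes identically. Injectivity, together with finite dimensionality, furnishes unique solvability for each $n \ge 1$, and I expect this norm-equivalence to be precisely the main obstacle---everything else is formal once it is available.

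The energy dissipation relation, finally, is read off from Theorem~\ref{thm:energy} by specialization. Because $\eps$ and $\mu$ are constant on each $\K$, the global dissipation terms $\tfrac12\|\eps^{1/2}(\E_h^n(\ut)-\E_h^{n-1}(\ut))\|_\Omega^2$ and its magnetic counterpart decompose into the element-wise sums with the constants $\tfrac{\eps}{2}$ and $\tfrac{\mu}{2}$ factored out, which is exactly the stated form. No further computation is needed here, so the only genuine work lies in the well-definedness argument above.
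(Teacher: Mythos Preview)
Your proposal is correct and follows essentially the same route as the paper: both arguments combine the coercivity estimate of Theorem~\ref{thm:coercivity} with the seminorm-to-norm upgrade on $\TT_p(\Oh\times\In)$ to obtain unique solvability, and then invoke Theorems~\ref{thm:consistency} and~\ref{thm:energy} verbatim for consistency and the energy relation. The only cosmetic difference is that the paper phrases the well-posedness step as ``$B^n$ is coercive in the space-time $L^2$-norm, hence uniquely solvable,'' whereas you unfold this into ``injectivity on a finite-dimensional space implies bijectivity''; the underlying mathematics is identical.
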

\begin{proof}
By Theorem~\ref{thm:norm} and Theorem~\ref{thm:coercivity}, we obtain 
$$
B^n(\vh,\ve;\vh,\ve) \ge\tnorm(\ve,\vh)\tnorm_{\Oh \times \In}^2 \ge c \big( \|\eps^{1/2} \ve\|^2_{\Omega \times \In} + \|\mu^{1/2} \vh\|_{\Omega \times \In}^2 \big)
$$
for all $(\ve,\vh) \in \TT_p(\Oh \times \In)$ with positive constant $c = \min_{\K \in \Oh} C(p,\K,\In) > 0$.
Hence $B^n$ is coercive on $\TT_p(\Oh \times \In)$ and \eqref{eq:stdgt} therefore uniquely solvable.
This shows that the discrete variational problems \eqref{eq:stdgt}  are uniquely solvable for every time step $n \ne 1$. 
Since Method~\ref{meth:stdgt} is a special instance of the Method~\ref{meth:stdg}, the 
consistency and energy dissipation relation follow directly from Theorems~\ref{thm:consistency} and \ref{thm:energy} in Section~\ref{sec:stdg}.
\end{proof}

\section{Discussion} \label{sec:disc}

Before we turn to numerical experiments, let us summarize some of the basic properties of the 
space-time discontinuous Galerkin methods discussed in this paper, in particular, of the method using the Trefftz polynomials.
 
(i) Any space-time discontinuous Galerkin method of the form \eqref{eq:stdg} results in an implicit time-stepping scheme, as long as the variational problems are unqiquely solvable. This is the case for a proper choice of approximation spaces, e.g., for complete tensor product polynomials or the Trefftz polynomials. 

(ii) The discontinuous Galerkin framework provides a high level of flexibility concerning spatial and temporal discretizations, e.g., one can use different, adaptive, and even non-conforming meshes on every time slab. Approximations of arbitrary order with varying polynomial orders are possible; see \cite{Lilienthal14} for some results in this direction. 

(iii) Since the underlying problem is hyperbolic, 
the algebraic system to be solved in every time step will be well-conditioned, as long as the spatial and temporal mesh size are of comparable size; the condition number will however depend moderately on the polynomial degree \cite{Lilienthal14}.

(iv) For a standard space-time discontinuous Galerkin method using tensor product polynomials \cite{Monk14,Lilienthal14}, 
$O(p^4)$ basis functions are associated to every space-time element $K \times \In$.
In contrast to that, only $O(p^3)$ Trefftz polynomials are required.
The use of Trefftz polynomials therefore yields a substantial decrease in the size of the linear systems to 
be assembled and solved in every time step, in particular, when turning to higher order approximations.


(v) The space-time discontinuous Galerkin Trefftz method only involves integrals over element interfaces. 
The evaluation of volume integrals, which is the leading order computational complexity for a traditional discontinuous Galerkin method, can be completely avoided.
The restriction to Trefftz polynomials therefore also substantially reduces the cost of assembling.

(vi) As demonstrated in the proof of Theorem~\ref{thm:rep}, the local Trefftz basis can be constructed systematically and efficiently.
In principle, any polynomial basis for $\PP_p(\K)^3 \times \PP_p(\K)^3$ can be choosen for the initial values on 
the space-time element $\K \times \In$, and can then be extended to a basis for $\TT_p(\K \times \In)$ by 
symbolic solution of Maxwell's equations on this element. 
As demonstrated, additional constraint can be incorporated easily.

(vii) Since the Trefftz method is based on local polynomial approximations 
and only involves standard interface integrals, it can easily be integrated 
in any existing discontinuous Galerkin code. Only the set of basis functions has to be adopted.

\medskip 

Theorem~\ref{thm:stdgt} provides the basic ingredients for a complete error analysis of the space-time discontinuous Galerkin Trefftz  proposed in this paper; see \cite{Lilienthal14,Monk14,MoiPer14} for related results.
A detailed error analysis for Method~\ref{meth:stdgt} would however exceed the scope of 
the current manuscript and will be published elsewhere.
%
%
For illustration of the stability and convergence properties of our method, we will instead 
report on numerical tests in which we observe spectral convergence and optimal convergence orders
with respect to spatial and temporal meshsize.
%
%
With these tests, we also evaluate the dissipation and dispersion behavior of the numerical scheme.

\section{Numerical results} \label{sec:numerics}

For a validation of the discontinuous Galerkin Trefftz method, we present a series of 
numerical tests that illustrate the theoretical statements on stability and dissipativity 
of the method and demonstrate its overall convergence behavior. In Section~\ref{sec:cavity}, we 
show optimal convergence with respect to mesh refinement in space and time, and we verify 
exponential convergence with respect to the polynomial degree for sufficiently smooth 
solutions. In Section~\ref{sec:dispersion}, we analyze the numerical dispersion and dissipation 
behavior. Section~\ref{sec:slits} illustrates the application of our method to the simulation 
of two diffraction experiments.  

\subsection{The general setting} \label{sec:numerics:setting}

All numerical results presented in the following sections correspond to a quasi two-dimensional 
setting with symmetry in one of the coordinate directions,
which allows us to display the results more easily.
We consider a domain of the form $\Omega = \Omega' \times (0,1)$ with $\Omega' \subset \RR^2$ denoting the cross-section for fixed $z \in (0,1)$. We further assume that the material parameters and the fields are independent of $z$ 
and that 
\begin{align} \label{eq:tm}
\E = (0,0,\Eup_3(x,y,t)) 
\qquad \text{and} \qquad 
\H = (\Hup_1(x,y,t),\Hup_2(x,y,t),0).
\end{align}
This setting amounts to three dimensional problems with symmetry and polarization of the electric field in $z$ direction. 
The wave propagation is then governed by Maxwell's equations
\begin{align*}
\eps \dt \E - \curl \H = \zero, 
\qquad 
\mu \dt \H + \curl \E  = \zero \qquad \text{in } \O' \times (0,1) \times (0,T).
\end{align*}
with initial conditions $\E(0)=\E^0$ and $\H(0)=\H^0$ on $\Omega$. We also explicitly incorporate the constraints $\div (\eps \E)=0$ and $\div (\mu \H) = 0$. Note that the latter condition is automatically satisfied due to the form \eqref{eq:tm} of the fields. 
We will consider boundary conditions of the form 
\begin{align*}
\n \times \H &= \zero, \qquad \qquad \! \text{on } \Omega' \times \{0,1\} \times (0,T), \\
\n \times \E + \beta \n \times (\n \times \H) &= \n \times \g, \qquad  \text{on } \partial\Omega' \times (0,1) \times (0,T).
\end{align*}
The first condition reflects the symmetry with respect to the $z$-direction
and the second condition allows to model rather general boundary conditions on the lateral boundaries.
%
%
In all our tests, we utilize the discontinuous Galerkin Trefftz method introduced 
in Section~\ref{sec:stdgt} with local approximation spaces $\widetilde \TT_p^{2D}$ discussed in Section~\ref{sec:lda}. 
%
%

\subsection{Convergence rates} \label{sec:cavity}

Our first test problem models the oscillation of a a cylindrical wave in a rectangular cavity 
$\Omega = \Omega' \times (0,1)$ with $\Omega'=(0,\pi) \times (0,\pi)$ and 
permittivity and permeability set to $\eps=\mu=1$. 
For any $m,n \in \NN$ and $\omega=\sqrt{m^2+n^2}$, the functions
\begin{align*} 
\Hup_1 &= - n \sin \left(m x \right) \cos \left(n y \right) \sin \left( \omega t \right), &
\Hup_2 &= m \cos \left(m x \right) \sin \left(n y \right) \sin \left( \omega t \right), \\
\Eup_3 &= \omega \sin \left(m x \right) \sin \left(n y \right) \cos \left( \omega t \right)
\end{align*} 
solve Maxwell's equations, the divergence constraints $\div (\eps \E)=\div(\mu\H)=0$, the symmetry boundary conditions $\n \times \H=0$ on $\Omega' \times \{0,1\}$, and PEC conditions $\n \times \E = \zero$ on the lateral boundary $\partial\Omega' \times (0,1)$. 
In our simulations, we set $m=n=1$.

Since the solution of the model problem is infinitely differentiable, 
we expect to obtain convergence of arbitrary order when increasing the polynomial degree,
which is what we can observe in practice.
In Figure~\ref{fig:spectral_convergence}, we display the relative errors in the space-time $L^2$-norm obtained by simulation on a uniform $10 \times 10 \times 1$ mesh. 
The time horizon ist set to $T=5 \sqrt{2}$ which amounts to five periods. 
\begin{figure}[!ht]
\centering
\includegraphics[width=0.45\textwidth]{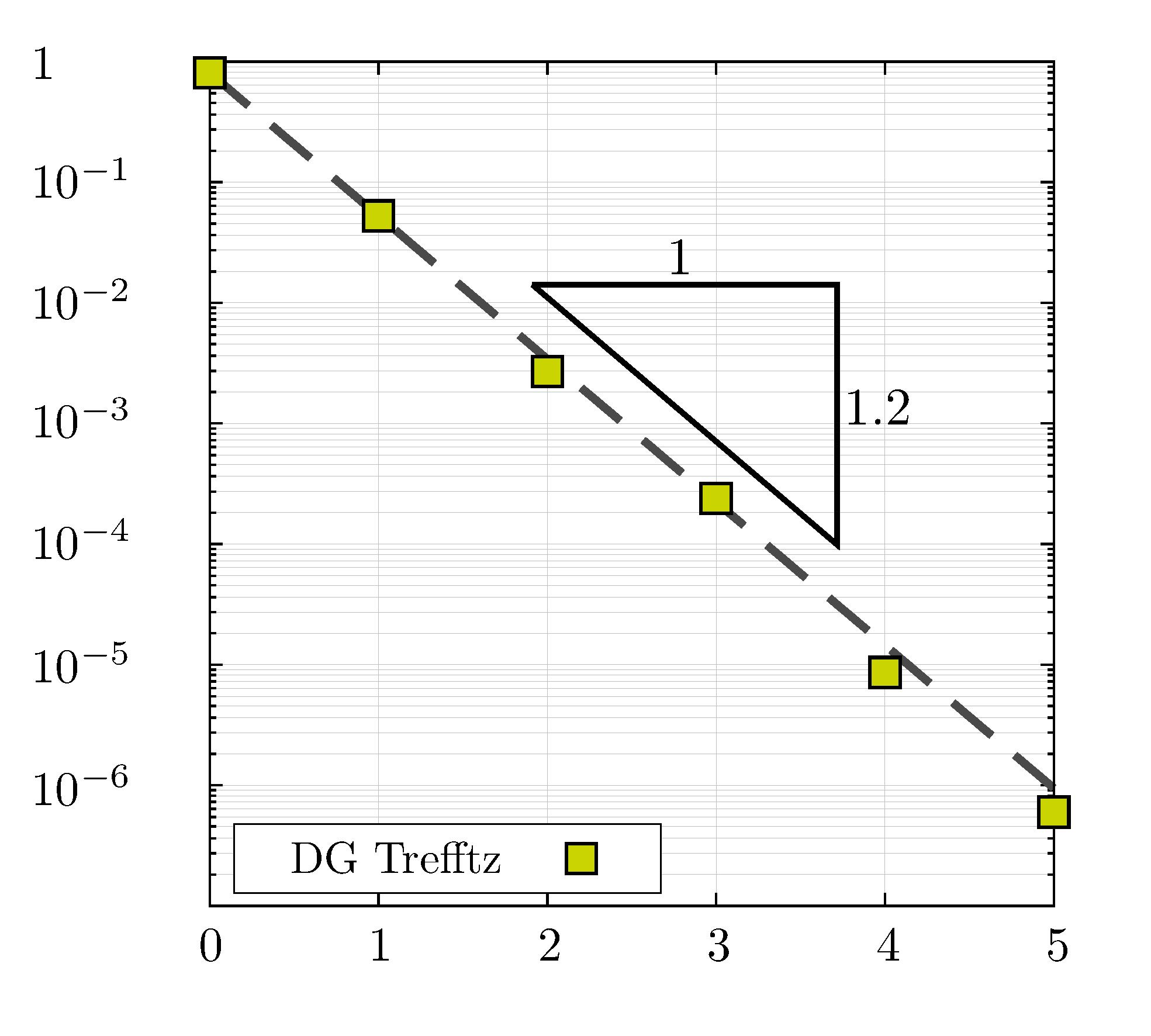} 
  \caption[fig:spectral_convergence]{Relative errors in the space-time $L^2$-norm vs polynomial degree $p$ for the simulation of the cavity resonator problem discussed in Section~\ref{sec:cavity}.} \label{fig:spectral_convergence}
\end{figure}
The logarithmic error plot clearly indicates exponential convergence of the method with respect to the polynomial degree.  

\medskip

In order to assess the convergence orders with respect to the spatial and temporal mesh size, we separately consider refinement in space and time. 
The mesh size in the respective other direction is chosen sufficiently small in order not to affect the convergence. When considering spatial refinement, 
we choose the temporal stepsize corresponding to the finest spatial mesh size, and vice versa. 
\begin{figure}[!ht]
\centering
\includegraphics[width=0.8\textwidth]{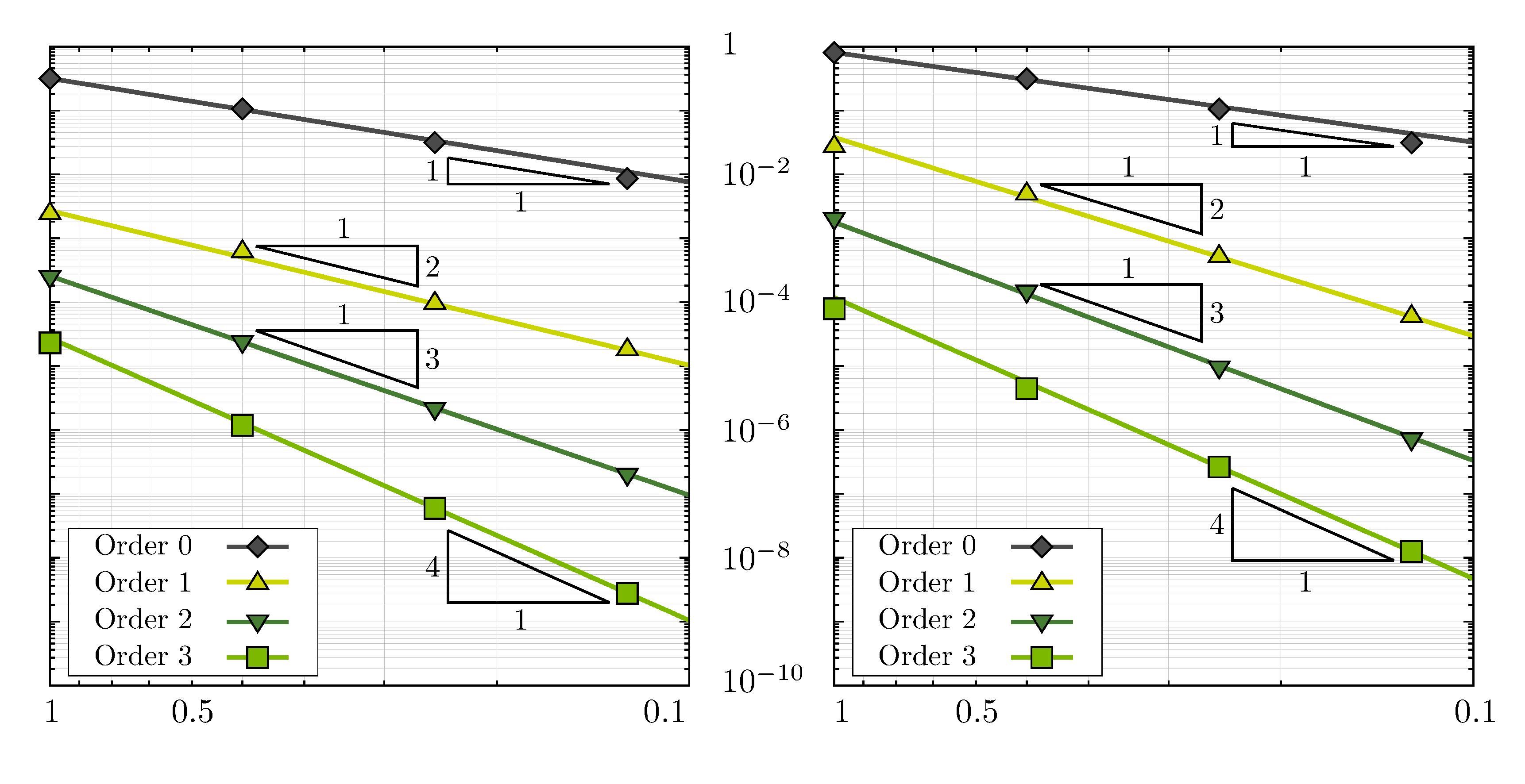}
\caption[fig:convergence_order]{Relative errors in space-time $L^2$-norm for the cavity resonator problem discussed in Section~\ref{sec:cavity}. 
Left: convergence with respect to the spatial refinement; Right: convergence for temporal refinement. For both test series, the coarsest mesh was chosen to consist of $10 \times 10 \times 1$ elements and $50$ time steps. The spatial elements are not refined in $z$ direction. 
The } \label{fig:convergence_order}
\end{figure}
Figure~\ref{fig:convergence_order} displays the the convergence histories for spatial and temporal refinement obtained for different polynomial degrees $p$. 
We observe convergence of order $p+1$ with respect to both, the spatial and temporal mesh size. 
These are the optimal orders concerning the approximation properties of the corresponding full polynomial spaces. 
This optimal behavior or Trefftz methods has already been observed for problems in dimension \cite{Kretzschmar14,MoiPer14}.
In particular, we do not encounter the order reduction of standard discontinuous Galerkin methods reported by \cite{Cohen2006} in our numerical tests. 

\subsection{Numerical dissipation and dispersion} \label{sec:dispersion}

As proved in Sections~\ref{sec:stdg} and \ref{sec:stdgt}, the discontinuous Galerkin methods investigated in this paper 
are slightly dissipative in nature. With the following tests, we aim to quantify the amount and the effect of the numerical dissipation. In addition, we want to evaluate the numerical dispersion of the method, 
i.e., the variation of the propagation velocities for signals of different wave length. 

As a test case, we consider the propagation of a plane wave defined by 
\begin{align} \label{eq:planewave}
\Hup_1=0, \qquad \Hup_2 = \Eup_3 = \psi(x-t)
\end{align}  
with $\psi$ denoting some given function. 
%
For the numerical test below, we set 
\begin{align} \label{eq:theta}
\psi(x)=\Theta \left(\cos (\pi (x-x_0)/10) \right) \cdot \Theta \left(\cos (\pi (x-x_1)/10) \right),
\end{align}
where $\Theta$ is the Heavyside function.  
The solution thus corresponds to a rectangular pulse and is time periodic with period $T_{per}=10$.
As computational domain, we choose $\Omega=\Omega' \times (0,1)$ with $\Omega'=(-10,10) \times (0,3)$.
The electromagnetic fields defined by \eqref{eq:planewave}--\eqref{eq:theta} solve Maxwell's equations 
with $\eps=\mu=1$, the divergence free conditions $\div (\eps \E)=\div(\mu\H)=0$, the symmetry conditions 
$\n \times \H=0$ on $(-10,10) \times (0,3) \times \{0,1\}$ and $(-10,10) \times \{0,3\} \times (0,1)$, 
and periodic boundary conditions at the lateral boundary $\{0,L\} \times (0,H) \times (0,1)$.
%

In Figure~\ref{fig:boxpropagation}, we depict snapshots of the electric field density $\Eup_3$ for one period of 
time.
\begin{figure}[!ht]
 \centering
 \includegraphics[width=0.9\textwidth]{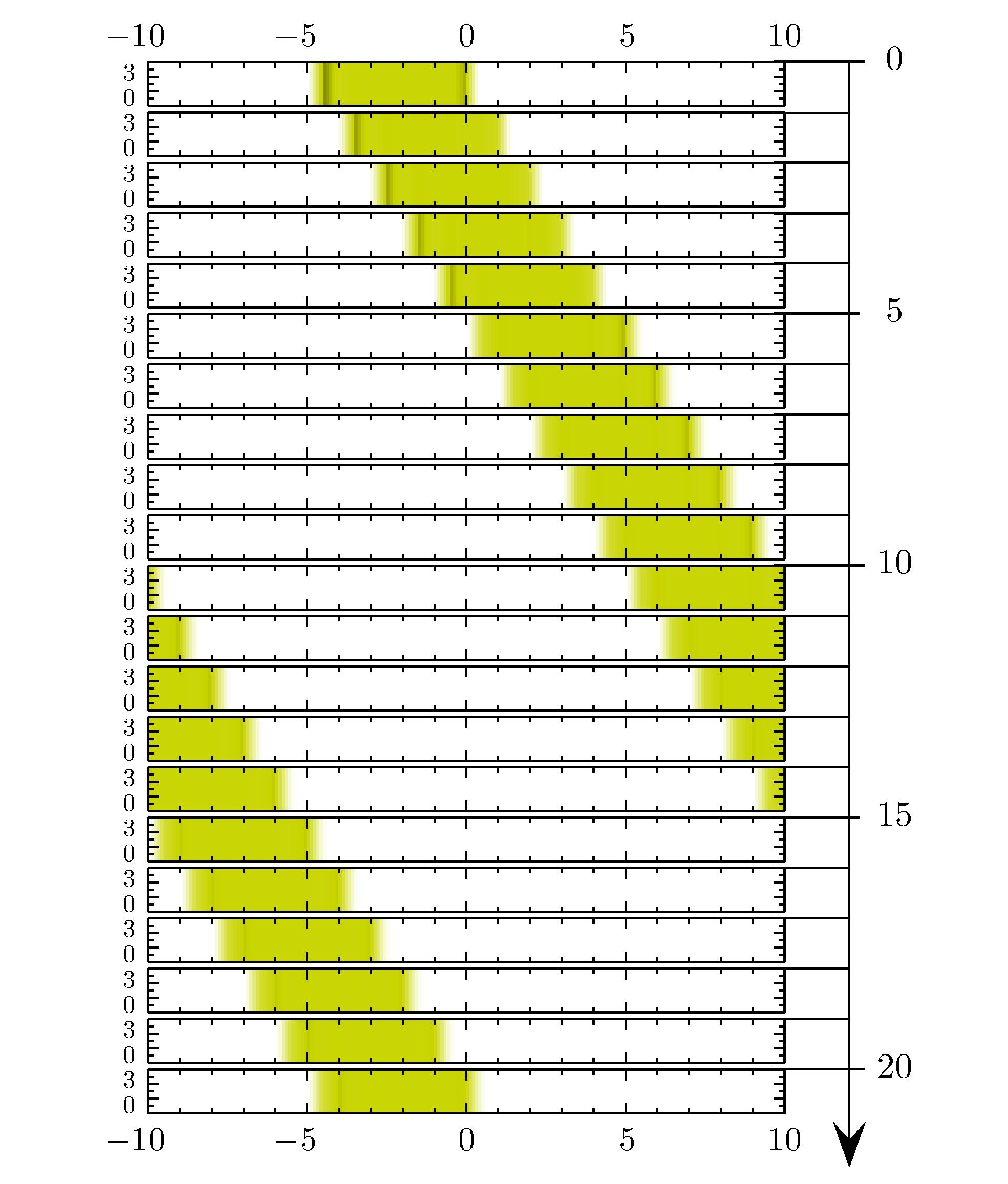}
  \caption[fig:box-prop]{Simulated electric field density $\Eup_3$ of a plane wave \eqref{eq:planewath} propagating from left to right through homogeneous domain with periodic boundary conditions. The results are obtained with order $p=3$ on a uniform mesh with $ 20 \times 3 \times 1$ elements and a temporal stepsize of $\triangle t=1$. 
  The slight overshoots observed at the initial time $t=0$ are a manifestation of the Gibbs phenomenon. 
  Due to numerical dissipation, the overshoots gradually vanish during the propagation.
  After one period, the signal reaches its initial position.} \label{fig:boxpropagation}
\end{figure}
%
Slight overshoots due to the Gibbs phenomenon can be observed at time $t=0$. 
These gradually disappear within a few time steps due to the dissipative behavior of 
the numerical method which damps high oscillations. 
After one period of time, the signal reaches its initial position, which indicates, that the 
wave is propagating at the correct speed. 

\medskip 

For further evaluation of the dissipation and dispersion behavior, we compare in Figure~\ref{fig:disp-error} the electric field amplitudes $\Eup_3(x,t)$ after one, ten, and one hundred periods to the  field at time $t=0$.  
To get insight into the numerical dissipation mechanism, we also display the modulus of the Fourier transformed signals. 
\begin{figure}[!ht]
 \centering
 \includegraphics[width=1.0\textwidth]{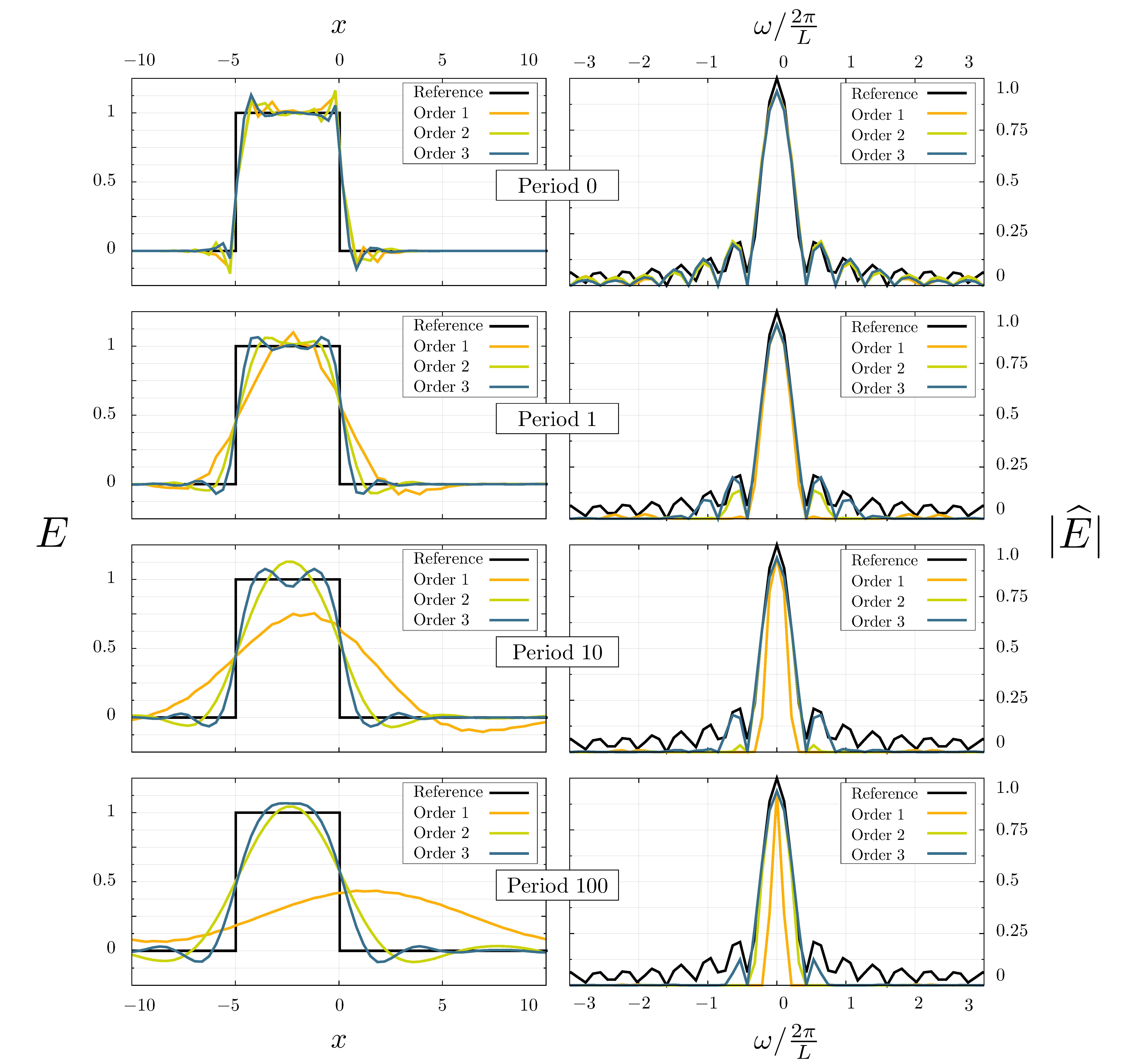}
  \caption[fig:disp-error]{Simulated electric field density $\Eup_3$ corresponding to \eqref{eq:planewave}-
  Left: field amplitude $\E_3(x,1,t)$. Right:  normalized Fourier spectrum $\widehat \E_3(\omega,1,t)$.
  The results for approximation orders $p=1,2,3$ are depicted, respectively, in orange, green, and blue. 
  The exact  solution \eqref{eq:planewave} is displayed in black. The plots are given for time instances $t=0,20,200,2000$ corresponding to $0$, $1$, $10$, and $100$ periods.} \label{fig:disp-error}
\end{figure}
As can be seen from the plots of the Fourier transforms, the dissipation mainly damps the high frequency components and 
therefore acts effectively as a stabilizing mechanism. 
Note that the amount of numerical dissipation decreases with increasing the polynomial degree. 
The plots of the field amplitudes $\Eup_3(x,t)$ reveal that the method with order $p = 1$ shows some significant dispersion while for $p \ge 2$ we can observe hardly any dispersion. 
Even after one hundred periods, the broad band rectangular pulse stays very well located. 
The smearing of the discontinuities can be explained by the numerical dissipation
which can be further reduced by increasing the polynomial approximation order. 

\newpage 

\subsection{Refraction experiments} \label{sec:slits}
To demonstrate the usability of the discontinuous Galerkin Trefftz method in a wider range of applications,
we present simulation results for numerical tests modeling the refraction of a plane wave at slits and materials. 
The initial fields and boundary conditions are chosen as in the plane wave propagation example of the previous section. 
Here, however, we consider the propagation through a material. 

\bigskip 

In the first test case, a rectangular pulse is propagating onto a PEC wall with a small slit. 

\begin{figure}[ht!]
 \centering
 \includegraphics[width=0.7\textwidth]{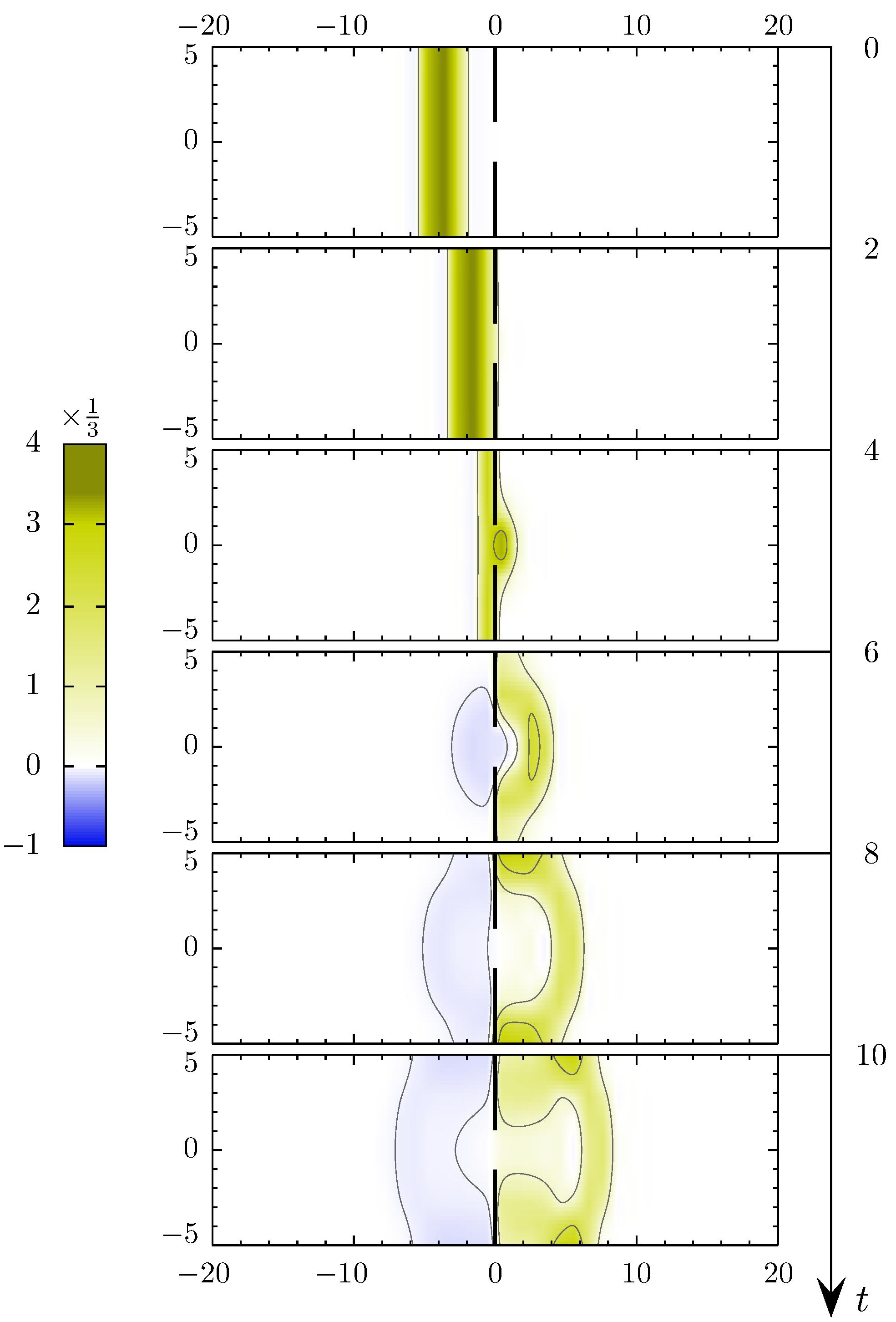}
  \caption[fig:single-slit]{Simulated electric field density $\Eup_3$ of a plane reflected at a PEC wall at $x=0$. 
  Most of the wave is reflected at the wall with change of sign in the electric field. 
  The part propagating through the slit generates an almost cylindrical wave emanating from the center of the slit.} \label{fig:single-slit}
\end{figure}

The simulation results depicted in Figure~\ref{fig:single-slit} show the expected physical behavior: 
Most of the wave is reflected at the wall with a sign change in the electric and a small fraction of 
the field can propagate through the slit and generates an almost cylindrical wave 
emanating from the center of the slit. 
By simulation on larger domains, we expect the method to be applicable for 
obtaining highly accurate diffraction patterns.
%


\begin{figure}[ht!]
 \centering
 \includegraphics[width=0.7\textwidth]{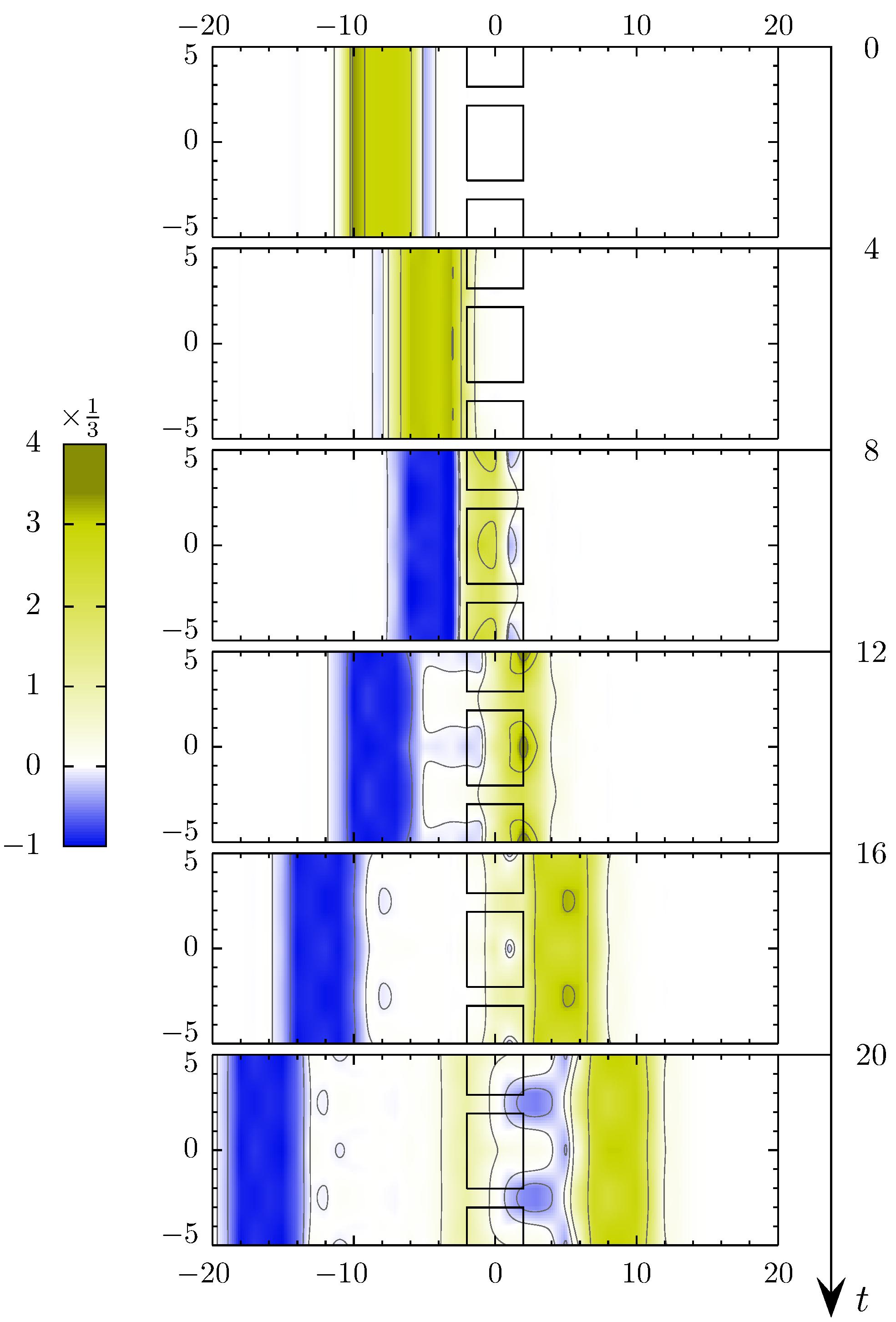}
  \caption[fig:double-slit]{Electric field density $\Eup_3$ of a plane wave propagating through a dielectric double slit with relative permittivity $\eps=4$.  In the free area the material parameters are set to $\eps=\mu=1$.
  The wave is partially reflected at the boundaries of the material and propagates only with lower velocity within 
  the inclusions. } \label{fig:double-slit}
\end{figure}

The second example deals with the propagation of a plane wave through a double slit in a dielectric material of relative permittivity $\eps=4$, while the rest of the domain is covered by a homogeneous material with $\eps=\mu=1$.
Some snapshot of the evolution of the propagating wave are depicted in Figure~\ref{fig:double-slit}.
As expected by physical considerations, the wave first propagates at constant velocity towards the double slit.
Around $t=4$, the wave front impinges on the material and some of the wave is reflected at the material discontinuity. 
%
%
While the wave continues to propagate with constant speed through the areas of the slit, 
the propagation is somewhat slower in the material parts. 
Around $t=12$, a secondary reflection is generated by the wave leaving the material on the right boundary. 
Another one is generated around $t=20$ at the left boundary of the material, a.s.o. 
A typical diffraction pattern is generated behind the slits.

\section{Conclusion}
In this paper, we proposed a discontinuous Galerkin method for electromagnetic wave propagation problems 
based on local approximation with Trefftz polynomials. 
We discussed the explicit construction of a basis for the local spaces of Trefftz polynomials and 
proved some elementary properties of these spaces.  
%
%
%
The resulting discontinuous Galerkin Trefftz method was shown to be well-posed, 
slightly dissipative, and stable with respect to a space-time $L^2$ norm.  
Spectral convergence with respect to the polynomial degree, and optimal convergence rates with respect 
to spatial and temporal meshsize were observed in numerical tests.  
The simulation results indicate that the effect of numerical dissipation becomes negligible for higher 
polynomial approximation orders. For polynomial degree $p \ge 2$, the method showed hardly any 
numerical dispersion.
In comparison to a space-time discontinuous Galerkin method with full polynomial approximation spaces, 
the method based on Trefftz polynomials leads to a substantial reduction in the number of degrees of freedom 
and in the assembling cost, while at the same time, the flexibility of the discontinuous Galerkin framework 
is preserved. For particular applications, the Trefftz method therefore turns out to be a good alternative to 
more standard discontinuous Galerkin approximations.


\appendix 

\section*{Appendix}

\renewcommand\thesection{A}

Let us provide detailed proofs for some of the results stated in the previous sections.
The derivations are more or less straight forward and the arguments are standard in the analysis of discontinuous Galerkin methods. 
The results are presented in detail only for convenience of the reader.

\subsection{Proof of Theorem~\ref{thm:coercivity}} 

We test the variational problem \eqref{eq:stdg} with $\ve=\E_h^n$ and $\vh = \H_h^n$ and apply the following manipulations: 
We first consider the term arising in the right hand side $B^n(\E_h^n,\H_h^n;\E_h^n,\H_h^n)$ of equation \eqref{eq:stdg}. 
By the Leibniz rule for differentiation, we have
\begin{align}
\tag{i} 
\int_\K \big( \eps \dt \E_h^n \cdot \E_h^n + \mu \dt \H_h^n \cdot \H_h^n\big) 
&= \frac{1}{2} \frac{d}{dt} \big(\|\eps^{1/2} \E_h^n\|^2_\K + \|\mu^{1/2} \H_h^n\|^2_\K\big).
\end{align}
The remaining volume terms can be treated via integration-by-parts, e.g., 
\begin{align}
\tag{ii} 
\int_\K \curl \E_h^n \cdot \H_h^n - \curl \H_h^n \cdot \E_h^n 
&= -\int_\dK \n \times \H_h^n \cdot \E_h^n.
\end{align}
Let us now turn to the interface terms:
For interfaces $\f = \dK_1 \cap \dK_2$ between adjacent elements, there holds
\begin{align} \tag{iii}
&\n_1 \times (\H_1 - \{H\}) \cdot \E_1 + \n_2 \times (\H_2 - \{\H\}) \cdot \E_2 
- \n_1 \times (\E_1 - \{E\}) \cdot \H_1  
\\ & \qquad \qquad 
+ \n_2 \times (\E_2 - \{\E\}) \cdot \H_2  
  = \n_1 \times \H_1 \cdot \E_1 + \n_2 \times \H_2 \cdot \E_2\notag
\end{align}
A combination of (i), (ii), (iii), summation over all elements, incorporation of the boundary conditions, and integration over the time interval yields
\begin{align*}
&B^n(\E,\H;\E,\H)
= 
\sum_{\K \in \Oh} \tfrac{\eps}{2} \|\ve(\ot)\|^2_{\K} + \tfrac{\mu}{2} \|\vh(\ot)\|^2_{\K} 
\\ &\qquad \qquad \qquad \qquad 
+ \sum_{\K \in \Oh} \tfrac{\eps}{2} \|\ve(\ut)\|^2_{\K} + \tfrac{\mu}{2} \|\vh(\ut)\|^2_{\K}
+ \sum_{\f \in \Fdh} \int_{\f \times \In} \beta |\n \times \H|^2 .
\end{align*}

\subsection{Proof of Theorem~\ref{thm:energy}}
Using the identity
$$
u v =  \frac{1}{2} u^2 + \frac{1}{2} v^2 - \frac{1}{2} (u-v)^2 
$$
we can express the right hand side of equation \eqref{eq:stdg} as 
\begin{align*}
&R^n(\E_h^{n-1},\H_h^{n-1};\E_h^n,\H_h^n) 
= -\int_{\dO} \n \times \g \cdot \H_h^n + \sum_{\K \in \Oh} \int_\K \eps  \E_h^{n-1} \cdot \E_h^n + \mu \H_h^{n-1} \cdot \H_h^n  \\
&\qquad = -\int_{\dO} \n \times \g \cdot \H_h^n 
        + \frac{1}{2} \sum_{\K \in \Oh} \big(\eps \|\E_h^{n-1}|_\K^2 + \mu \|\H_h^{n-1}\|^2_\K) \\
&\qquad \quad + \frac{1}{2} \sum_{\K \in \Oh} \big(\eps \|\E_h^n\|^2 + \mu \|\H_h^n\|^2_\K\big) 
        - \frac{1}{2} \sum_{\K \in \Oh} \big(\eps \|\E_h^{n-1} - \E\|^2_\K + \mu \|\H_h^{n-1} - \H_h^n\|^2_{\K} \big).
\end{align*}
Theorem~\ref{thm:energy} now follows from the equation $B^n(\E_h^n,\H_h^n;\E_h^n,\H_h^n) = R^n(\E_h^{n-1},\H_h^{n-1};\E_h^n,\H_h^n)$, the representation of $B^n(\E,\H;\E,\H)$ given in the previous proof,  and the above expression for $R(\E_h^{n-1},\H_h^{n-1};\E_h^n,\H_h^n)$  by a slight rearrangement of terms.

\subsection{Proof of identity \eqref{eq:identity}}
By changing the order of summation, one readily obtains 
\begin{align} \tag{iv}
\sum_{\K \in \Oh}  \int_\dK g = \sum_{\f  \in\Fih} (g_1+g_2) + \sum_{\f \in \Fdh} g
\end{align}
for any piecewise smooth function $g$. Here $g_1$, $g_2$ denote the two values of $g$ on the interface $\f = \dK_1 \cap \dK_2$ on the two elements $\K_1$ and $\K_2$, respectively. 
Using this together with formulas for the numerical fluxes $\E^*$ and $\H^*$ yields
\begin{align*}
&\sum_{\K \in \Oh} \int_{\dK}  \n \times (\H - \H^*) \cdot \ve - \n \times (\E-\E^*) \cdot \vh 
= - \sum_{\f \in \Fdh} \int_\f \n \times \E \cdot \vh \\
&\qquad  \qquad 
 + \sum_{\f \in \Fih} \int_\f \n_1 \times \H_1 \cdot \E_1+ \n_2 \times \H_2 \cdot \E_2 + \n_1 \times \H_1 \cdot \E_1  \n_2 \times \H_2 \cdot \E_2\\
&\qquad  \qquad \qquad \qquad \qquad \qquad 
 - \n_1 \times (\E_1 - \{\E\}) \cdot \H_1 - \n_2 \times (\E_2 - \{\E\}) \cdot \H_2. 
\end{align*}
Applying the identity (iii) on every interface element $\f$ already yields the result.

\end{document}